\title[Fujita-type freeness]{Fujita-type 
freeness for quasi-log canonical curves and surfaces}
\author{Osamu Fujino and Haidong Liu} 
\date{2018/12/11, version 0.16}
\subjclass[2010]{Primary 14C20, Secondary 14E30} 
\keywords{quasi-log canonical surfaces, semi-log canonical 
surfaces, Fujita-type freeness, minimal model program}
\address{Department of Mathematics, Graduate School of Science, 
Osaka University, Toyonaka, Osaka 560-0043, Japan}
\email{fujino@math.sci.osaka-u.ac.jp}
\address{Department of Mathematics, Graduate School of Science, 
Kyoto University, Kyoto 606-8502, Japan}
\email{liu.dong.82u@st.kyoto-u.ac.jp}
\DeclareMathOperator{\Supp}{Supp}
\DeclareMathOperator{\Exc}{Exc}
\DeclareMathOperator{\mult}{mult}
\DeclareMathOperator{\Nqlc}{Nqlc}
\DeclareMathOperator{\Nqklt}{Nqklt}
\DeclareMathOperator{\Nklt}{Nklt}
\newtheorem{thm}{Theorem}[section]
\newtheorem{lem}[thm]{Lemma}
\newtheorem{conj}[thm]{Conjecture}
\newtheorem{cor}[thm]{Corollary}
\theoremstyle{definition}
\newtheorem{ex}[thm]{Example}
\newtheorem{defn}[thm]{Definition}
\newtheorem*{ack}{Acknowledgments}       
\newtheorem{step}{Step}
\newtheorem{case}{Case}
\begin{document}

\maketitle 

\begin{abstract}
We prove Fujita-type basepoint-freeness for projective 
quasi-log canonical curves and surfaces. 
\end{abstract}

\section{Introduction}\label{a-sec1}

Fujita's freeness conjecture is 
very famous and is still open for 
higher-dimensional varieties. 
Now we know that 
it holds true in dimension $\leq 5$ (for the 
details, see \cite{ye-zhu} and the references therein). 

\begin{conj}[Fujita's freeness conjecture]\label{a-conj1.1} 
Let $X$ be a smooth projective variety of dimension $n$.
Let $L$ be an ample Cartier divisor.
Then the complete linear system 
$|K_{X}+(n+1)L|$ is basepoint-free. 
\end{conj}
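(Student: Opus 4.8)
The plan is to fix an arbitrary point $x \in X$ and to produce a global section of $K_X+(n+1)L$ that does not vanish at $x$; since $x$ is arbitrary, this yields basepoint-freeness of $|K_X+(n+1)L|$. The natural tool is the theory of multiplier ideals together with the Nadel vanishing theorem. Concretely, I would try to construct an effective $\mathbb{Q}$-divisor $D$ on $X$ with $D \sim_{\mathbb{Q}} cL$ for some rational $c<n+1$, such that the pair $(X,D)$ fails to be klt at $x$ while remaining klt in a punctured neighbourhood of $x$; equivalently, the cosupport of the multiplier ideal $\mathcal{J}(X,D)$ has $x$ as an isolated point. Writing $K_X+(n+1)L = K_X+D+\bigl((n+1)-c\bigr)L$ and noting that $\bigl((n+1)-c\bigr)L$ is ample, Nadel vanishing would give
\[
H^1\bigl(X,\ \mathcal{O}_X(K_X+(n+1)L)\otimes \mathcal{J}(X,D)\bigr)=0,
\]
so that the restriction map $H^0\bigl(X,\,K_X+(n+1)L\bigr)\to \bigl(\mathcal{O}_X/\mathcal{J}(X,D)\bigr)_x$ is surjective and produces the desired non-vanishing section.

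The divisor $D$ would be built by concentrating singularities at $x$. First I would combine Riemann--Roch with the ampleness of $L$: since $h^0(X,kL)$ grows like $k^nL^n/n!$, while imposing multiplicity $m$ at the smooth point $x$ costs roughly $\binom{n+m-1}{n}\sim m^n/n!$ conditions, for $k\gg 0$ one finds $D_k\in |kL|$ with $\mult_x D_k \geq m$ whenever $m<k(L^n)^{1/n}$. Scaling, the $\mathbb{Q}$-divisor $\tfrac{n}{m}D_k \sim_{\mathbb{Q}} \tfrac{nk}{m}L$ is non-klt at $x$, and since $L$ is Cartier and ample we have $L^n\geq 1$, so the coefficient $nk/m$ approaches $n/(L^n)^{1/n}\leq n$ and can be kept strictly below $n+1$ with room to spare. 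This numerical estimate, resting on the bound $L^n\geq 1$, is the heart of why the magic constant $n+1$ appears.

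The main obstacle --- and the reason the conjecture remains open in high dimension --- is the \emph{concentration of singularities}. The divisor produced above is typically non-klt along a positive-dimensional subvariety $W\ni x$, not at the single point $x$, so $\mathcal{J}(X,D)$ need not be cosupported at an isolated point and the above restriction argument fails. To remedy this one runs a tie-breaking argument: one cuts the minimal non-klt center $W$ down in dimension by adding small multiples of auxiliary divisors, invokes subadjunction / inversion of adjunction along $W$, and then inducts on $\dim W$. Each such cut consumes part of the positivity of $L$, and controlling the cumulative loss so that the final coefficient stays strictly below $n+1$ is exactly the delicate point. In dimension $\leq 2$ the required estimate closes cleanly --- this is essentially Reider's theorem, proved via a rank-two bundle / Bogomolov instability argument --- and in dimensions $3\leq n\leq 5$ it has been established by increasingly intricate refinements of this scheme, but I would not expect the naive induction to control the estimate for all $n$.
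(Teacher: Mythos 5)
The statement you are asked about is a \emph{conjecture}, not a theorem: the paper offers no proof of Conjecture \ref{a-conj1.1} and explicitly records that it is open in dimension $\geq 6$, being known only for $n\leq 5$ by work cited there. What the paper actually proves are Fujita-type statements for quasi-log canonical \emph{curves and surfaces} (Theorems \ref{a-thm1.3}, \ref{a-thm1.4} and Corollary \ref{a-cor1.5}), together with a weaker Angehrn--Siu-type bound $\tfrac{1}{2}n(n+1)$ in all dimensions quoted from \cite{liu}. So there is no ``paper's own proof'' to compare against, and any complete argument you supplied would be a major new result.

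Your proposal is not such an argument: it is a correct description of the standard Angehrn--Siu/Nadel strategy, but it contains a genuine, unresolved gap which you yourself identify. Concretely: the Riemann--Roch count produces $D\sim_{\mathbb Q}cL$ with $c<n+1$ that is non-klt \emph{somewhere at} $x$, but the minimal non-klt center $W$ through $x$ is typically positive-dimensional, and the tie-breaking/cutting-down induction on $\dim W$ consumes additional positivity of $L$ at every step. Summing these losses is exactly what produces the bound $\tfrac{1}{2}n(n+1)=n+(n-1)+\cdots+1$ rather than $n+1$; this is why the method proves the Angehrn--Siu theorem but not Fujita's conjecture, and no one has shown how to control the cumulative loss down to $n+1$ for general $n$. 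Your closing paragraph concedes precisely this (``I would not expect the naive induction to control the estimate for all $n$''), so the proposal should be read as a survey of the known approach and its obstruction, not as a proof. One smaller inaccuracy: the inequality $m<k(L^n)^{1/n}$ yields $\mult_xD_k\geq m$ only asymptotically in $k$, and the resulting coefficient $nk/m$ tends to $n/(L^n)^{1/n}$ only in the limit; to make even the first non-klt divisor rigorous one must take $k\gg0$ and accept a coefficient slightly above $n$, which is harmless for the target $n+1$ but should be stated carefully.
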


In this paper, we treat a generalization of 
Fujita's freeness conjecture for highly singular varieties. 
More precisely, we are mainly interested in quasi-log canonical pairs. 
A quasi-log canonical pair may be reducible and is not necessarily 
equidimensional. 
The union of some log canonical centers of a given log canonical 
pair is a typical example of quasi-log canonical pairs. 
We think that it is worth formulating and studying 
various conjectures for quasi-log canonical pairs 
in order to solve the original 
conjecture by some inductive arguments on the dimension.  

\begin{conj}[Fujita-type freeness for quasi-log canonical 
pairs]\label{a-conj1.2}
Let $[X, \omega]$ be a projective quasi-log canonical 
pair of dimension $n$. 
Let $M$ be a Cartier divisor on $X$. We put $N=M-\omega$. 
Assume that $N^{\dim X_i}\cdot X_i>
(\dim X_i)^{\dim X_i}$ for every positive-dimensional 
irreducible component 
$X_i$ of $X$. 
For every positive-dimensional 
subvariety $Z$ which is not an irreducible 
component of $X$, 
we put 
$$n_Z=\min_i\{ \dim X_i \, |\, 
{\text{$X_i$ is an irreducible component of $X$ with 
$Z\subset X_i$}}\}$$ and assume 
that $N^{\dim Z}\cdot Z\geq n_Z^{\dim Z}$. 
Then the complete linear system $|M|$ is basepoint-free. 
\end{conj}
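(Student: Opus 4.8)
The plan is to reduce the statement to a vanishing theorem for quasi-log canonical pairs and to attack it one point at a time, creating a quasi-log canonical centre at a prescribed point by a concentration-of-multiplicity argument. Fix an arbitrary closed point $x\in X$; it suffices to prove that $x\notin \mathrm{Bs}\,|M|$, i.e.\ that the evaluation map $H^{0}(X,\mathcal O_{X}(M))\to \mathcal O_{X}(M)\otimes\mathbb C(x)$ is surjective. I would deduce this from the vanishing of an $H^{1}$: if $\mathcal I$ is an ideal sheaf whose cosupport is concentrated at $x$ and which arises as (a piece of) the non-qlc ideal sheaf $\mathcal I_{\Nqlc}$ of a suitably modified quasi-log structure on $[X,\omega]$, then the Kawamata--Viehweg--Nadel-type vanishing theorem in the theory of quasi-log schemes gives $H^{1}(X,\mathcal I\otimes\mathcal O_{X}(M))=0$, and surjectivity follows from the long exact sequence. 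Thus the whole problem becomes: produce such an $\mathcal I$ with the right numerical room.

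The heart of the argument is the construction step. Working component by component, I would use the hypotheses $N^{\dim X_{i}}\cdot X_{i}>(\dim X_{i})^{\dim X_{i}}$ together with an asymptotic Riemann--Roch estimate to find, for each positive-dimensional component $X_{i}$ through $x$, an effective $\mathbb Q$-divisor $D_{i}\sim_{\mathbb Q}\lambda_{i}\,N|_{X_{i}}$ with $\lambda_{i}<1$ and $\mult_{x}D_{i}$ large enough to force a non-qlc point at $x$; the \emph{strict} inequality in the hypothesis is exactly what allows $\lambda_{i}<1$. After a tie-breaking (perturbation) argument one arranges that, near $x$, the non-qlc locus of the modified structure is a single quasi-log canonical centre $W$ through $x$, of dimension strictly smaller than the components containing it. Adjunction for quasi-log schemes then endows $W$ with an induced quasi-log canonical structure, and the conditions $N^{\dim Z}\cdot Z\ge n_{Z}^{\dim Z}$ imposed on every positive-dimensional subvariety $Z$ are precisely the inequalities guaranteeing that $(M-\omega)|_{W}$ inherits the hypotheses of the statement on $W$. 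This is what makes the induction close.

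With these two steps in place, the statement can actually be established for $\dim X\le 2$, which I would treat in two cases. The case $\dim X=1$ (quasi-log canonical curves) is essentially direct: a closed point gives the trivial condition $N^{0}\cdot Z\ge 1$, each positive-dimensional component is a curve with $\deg N|_{X_{i}}>1$, and one obtains global generation at $x$ by restricting to the components through $x$ and applying the qlc vanishing theorem, taking care of how the components are glued along the non-qlc locus. For $\dim X=2$ (surfaces) one runs the construction above: the created centre $W$ is either $0$-dimensional at $x$, in which case vanishing applies immediately, or a curve through $x$, in which case one restricts, invokes the curve case, and lifts the resulting section by the vanishing theorem on $X$.

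The main obstacle I expect is the reducible, non-equidimensional nature of quasi-log canonical pairs. In the classical Fujita setting $X$ is smooth and irreducible, so the multiplicity and tie-breaking machinery and the adjunction are standard; here components of different dimensions meet along lower-dimensional strata, and one must create the centre \emph{exactly} at $x$ while keeping control of the global quasi-log structure and of the non-qlc locus away from $x$. Matching the component-wise bounds $(\dim X_{i})^{\dim X_{i}}$ with the subvariety bounds $n_{Z}^{\dim Z}$ so that adjunction transports the hypotheses cleanly onto $W$ is the delicate point, and it is precisely why the statement is calibrated with the minimal component dimension $n_{Z}$ rather than with $\dim X$.
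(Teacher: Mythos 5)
Your overall architecture (reduce to a component through $x$, create a qlc centre at $x$ by concentrating multiplicity, then conclude by adjunction and the vanishing theorem) is the same as the paper's for $n\le 2$, but your sketch has two genuine gaps. First, you never explain how to perturb a quasi-log structure by a divisor with prescribed multiplicity at $x$. A qlc pair carries no boundary divisor on $X$: the data live on a simple normal crossing pair $(Y,B_Y)$ mapping to $X$, possibly with positive-dimensional fibres, so the classical multiplicity and tie-breaking machinery has nothing to act on. The paper removes this obstruction with two external inputs you omit: the normalization theorem (Theorem \ref{u-thm2.11}), and the deep structure theorem (Theorem \ref{u-thm2.12}, from \cite{fujino-slc-trivial}) which writes $p^*\omega=K_{X'}+B_{X'}+M_{X'}$ with $M_{X'}$ nef and $p(B_{X'}^{=1})=\Nqklt(X,\omega)$; the nef part is then traded for a general effective divisor to produce an honest boundary $\Delta_\varepsilon$ on $X$ with $K_X+\Delta_\varepsilon\sim_{\mathbb R}\omega+\varepsilon(\text{ample})$ and $\mathcal J(X,\Delta_\varepsilon)=\mathcal I_{\Nqklt(X,\omega)}$. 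Without this, your construction step cannot begin.

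Second, your assertion that the hypotheses $N^{\dim Z}\cdot Z\ge n_Z^{\dim Z}$ are ``precisely the inequalities guaranteeing that $(M-\omega)|_W$ inherits the hypotheses'' and that ``this is what makes the induction close'' is not correct. Once you add $rB$ with $B\sim_{\mathbb R}N$ to cut out the centre $W$, the positivity remaining on $W$ is only $(1-r)N|_W$, so the numerical hypotheses do not descend to $W$ and the naive induction fails; this is exactly why Fujita's conjecture is hard and why Angehrn--Siu-type bounds are quadratic in $n$. In dimension two the paper closes the argument not by induction but by the explicit estimate of Case \ref{a-case2}: the excess $\alpha$ in $\mult_PB=2+\alpha$ bounds the coefficient $c_\varepsilon$ needed to make $P$ a centre on $W$ by $c_\varepsilon\le 2-r_\varepsilon(1-\varepsilon)(2+\alpha)$, and together with $N\cdot W\ge 2$ this gives
$$
\deg\bigl((1-r_\varepsilon)(1-\varepsilon)N|_W-c_\varepsilon P\bigr)\ge (1-\varepsilon)r_\varepsilon\alpha-2\varepsilon>0
$$
for $0<\varepsilon\ll 1$, after passing to a sequence $\varepsilon_i\searrow 0$ with $r_{\varepsilon_i}$ bounded below. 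You would also need the separate treatment of singular points of the normal surface (Step \ref{a-step2}: pass to the minimal resolution, where the pulled-back boundary stays effective), since $\mult_PB>2$ does not control the log canonical threshold at a singular point of $X$.
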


If $N^{\dim X_i}\cdot X_i>\left(\frac{1}{2}n(n+1)\right)^{\dim X_i}$ and 
$N^{\dim Z}\cdot Z>\left(\frac{1}{2}n(n+1)\right)^{\dim Z}$ 
hold in Conjecture 
\ref{a-conj1.2}, then we have already known that 
the complete linear system 
$|M|$ is basepoint-free by the second author's 
theorem (see \cite[Theorem 1.1]{liu} for the precise statement). 
It is a generalization of Angehrn--Siu's theorem (see \cite{angehrn-siu}). 
When $\dim X=1$, 
we can easily check that Conjecture \ref{a-conj1.2} holds true. 

\begin{thm}[Theorem \ref{a-thm3.1}]\label{a-thm1.3}
Conjecture \ref{a-conj1.2} holds 
true for $n=1$. 
\end{thm}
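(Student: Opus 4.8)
The plan is to prove basepoint-freeness one point at a time: for each closed point $x\in X$ I would produce a global section of $M$ not vanishing at $x$, and the whole argument is organized around reducing this to the situation in which $\{x\}$ is a $0$-dimensional qlc center. First I would record how drastically the hypothesis simplifies when $n=1$. Since $\dim X=1$, every positive-dimensional subvariety $Z\subset X$ is an irreducible component, so the condition imposed on such $Z$ is vacuous and the assumption reduces to $N\cdot X_i>1$ for every $1$-dimensional irreducible component $X_i$, where $N=M-\omega$. In particular $\deg(N|_{X_i})>0$ for every such $X_i$, and as $0$-dimensional components carry no numerical condition, the Nakai--Moishezon criterion on the projective curve $X$ shows that $N$ is ample. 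This is the crucial positivity input: for any quasi-log structure $\omega'$ with $M-\omega'$ $\mathbb{R}$-linearly equivalent to a positive multiple of $N$, the divisor $M-\omega'$ is again ample, hence nef and log big, so the vanishing theorem for quasi-log schemes applies to $[X,\omega']$ and $M$.

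The central observation is then that it suffices to find, for each $x$, a quasi-log structure $\omega'$ on $X$ with $M-\omega'$ ample and with $\{x\}$ a $0$-dimensional qlc center of $[X,\omega']$. Indeed, by adjunction the induced pair $[\{x\},\omega'|_{\{x\}}]$ is quasi-log canonical of dimension $0$, and the associated exact sequence together with the vanishing $H^1(X,\mathcal{I}_{\{x\}}\otimes\mathcal{O}_X(M))=0$ yields the surjectivity of $H^0(X,\mathcal{O}_X(M))\to H^0(\{x\},M|_{\{x\}})$; since the target is nonzero, any preimage of a nonzero element is a section of $M$ not vanishing at $x$. When $\{x\}$ is already a $0$-dimensional qlc center of $[X,\omega]$ (for instance when $x$ is an isolated point, or a point where $X$ fails to be smooth) one simply takes $\omega'=\omega$, and the assertion is immediate.

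The remaining and essential case is when $x$ is a smooth point of $X$ lying on a unique $1$-dimensional component $X_i$ and $\{x\}$ is not yet a qlc center. Here I would create a qlc center at $x$ by perturbation. Choose $t$ with $1/(N\cdot X_i)\le t<1$, which is possible exactly because $N\cdot X_i>1$, and produce an effective $\mathbb{R}$-divisor $D\sim_{\mathbb{R}}tN$ whose local behaviour at $x$ forces coefficient $1$ there, so that $\omega'=\omega+D$ defines a quasi-log structure with $\{x\}$ as a $0$-dimensional qlc center, while $M-\omega'\sim_{\mathbb{R}}(1-t)N$ remains ample. The threshold $N\cdot X_i>1=(\dim X_i)^{\dim X_i}$ enters precisely at this point: degree at least $1$ is what is needed to raise a coefficient to $1$ at a point of a curve, and the strict inequality is what leaves room to keep $(1-t)N$ positive. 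Applying the reduction of the previous paragraph to $\omega'$ then produces the desired section, and since $x$ was arbitrary this proves that $|M|$ is basepoint-free.

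I expect the main obstacle to be this perturbation step: rigorously modifying the quasi-log structure so that $[X,\omega+D]$ is genuinely quasi-log canonical with $\{x\}$ a qlc center, rather than merely numerically plausible. On a possibly reducible and non-normal curve the construction must be carried out at the level of a quasi-log resolution, and one must verify both that no unwanted qlc centers are introduced and that the perturbation is compatible with the ambient globally embedded simple normal crossing structure. The degree bookkeeping that makes $\{x\}$ an isolated minimal center, while preserving the ampleness of $M-\omega'$, is where I would concentrate the technical work.
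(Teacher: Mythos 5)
Your overall strategy is the paper's: reduce to the irreducible component $X_i$ through $P$ via the vanishing theorem, then perturb $\omega$ at a smooth non-center point $P$ to manufacture a $0$-dimensional qlc center and apply adjunction plus vanishing again. But the perturbation step as you state it has a genuine gap, and it is not merely a matter of ``rigorously carrying out'' the construction. You propose to add an effective $D\sim_{\mathbb R}tN$ with coefficient exactly $1$ at $x$. The correct amount to add is the threshold
$c=\max\{t\geq 0\mid [X,\omega+tx]\ \text{is quasi-log canonical}\}$,
and this can be strictly less than $1$: the quasi-log resolution $f:(Y,B_Y)\to X$ may already carry positive coefficients of $B_Y$ along components of $f^{*}x$ even though $x$ is not a qlc center. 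If $c<1$ and you add coefficient $1$, then $[X,\omega+D]$ is a quasi-log scheme but not quasi-log canonical at $x$; the point falls into $\Nqlc(X,\omega+D)$ instead of becoming a qlc center, and the adjunction argument you invoke does not apply in the form you state it. (One could still salvage this via the vanishing theorem for quasi-log schemes with nonempty non-qlc locus, but that is a different argument and you would need to justify surjectivity onto the possibly non-reduced scheme supported at $x$.)

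What is actually needed, and what the paper supplies as Lemma \ref{u-lem2.13}, is the two-sided bound $0<c\leq 1$. The upper bound is proved by observing that $c>1$ would force $f^{*}P\leq\lceil -(B_Y^{<1})\rceil$, contradicting $\mathcal O_X\overset{\sim}{\to} f_*\mathcal O_Y(\lceil -(B_Y^{<1})\rceil)$. With this in hand the paper simply passes to $[X,\omega+cP]$, where $P$ \emph{is} a qlc center, and checks $\deg(M-(\omega+cP))>1-c\geq 0$, so the vanishing theorem applies; the hypothesis $N\cdot X_i>1$ enters exactly here. This lemma is precisely the content you deferred to ``where I would concentrate the technical work,'' so your proposal identifies the right difficulty but does not resolve it, and the specific normalization you chose (coefficient $1$ rather than $c$) would have to be corrected even after the lemma is established. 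A minor additional imprecision: an isolated point of $X$ is a qlc stratum (an irreducible component), not a qlc center, though the surjectivity argument goes through for qlc strata just as well.
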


The main technical result of this paper is the following theorem.  

\begin{thm}[Theorem \ref{a-thm3.2}]\label{a-thm1.4}
Let $[X, \omega]$ be a quasi-log canonical pair such that $X$ is a normal 
projective irreducible surface. 
Let $M$ be a Cartier divisor on $X$. 
We put $N=M-\omega$. 
We assume that $N^2>4$ and $N\cdot C\geq 2$ for every 
curve $C$ on $X$. 
Let $P$ be any closed point of $X$ that is not included in 
$\Nqklt(X, \omega)$, 
the union of all qlc centers of $[X, \omega]$. 
Then there exists $s\in H^0(X, \mathcal I_{\Nqklt(X, \omega)}
\otimes \mathcal O_X(M))$ such that 
$s(P)\ne 0$, where $\mathcal I_{\Nqklt(X, \omega)}$ is 
the defining ideal sheaf of $\Nqklt(X, \omega)$ on $X$. 
\end{thm}

The proof of Theorem \ref{a-thm1.4} in Section \ref{a-sec3} 
heavily depends on the 
first author's new result obtained in \cite{fujino-slc-trivial} (see 
Theorem \ref{u-thm2.12} below), 
which comes from the theory of variations of mixed Hodge structure on 
cohomology with compact support. 
By combining Theorems \ref{a-thm1.3} and 
\ref{a-thm1.4} with our result on the normalization of 
quasi-log canonical pairs (see Theorem \ref{u-thm2.11} below), 
we prove Conjecture \ref{a-conj1.2} for $n=2$ in full 
generality.  

\begin{cor}[Corollary \ref{a-cor3.3}]\label{a-cor1.5}
Conjecture \ref{a-conj1.2} holds true for $n=2$. 
\end{cor}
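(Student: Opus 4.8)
The plan is to check base-point-freeness of $|M|$ pointwise: for every closed point $P\in X$ I would exhibit a section $s\in H^0(X,\mathcal O_X(M))$ with $s(P)\neq 0$. The starting point is that the numerical hypotheses force $N=M-\omega$ to be nef and log big on $[X,\omega]$. Indeed $N$ is positive on every curve (on a one-dimensional component $N\cdot X_i>1$, while on a curve $C$ sitting inside a surface component $N\cdot C\geq n_C\geq 1$), and its restriction to every qlc stratum $V$ is big because $N^{\dim V}\cdot V>0$ by assumption. Consequently the fundamental vanishing theorem for quasi-log canonical pairs applies and yields $H^1\!\left(X,\mathcal I_{\Nqklt(X,\omega)}\otimes\mathcal O_X(M)\right)=0$, so that the restriction map $H^0(X,\mathcal O_X(M))\to H^0\!\left(\Nqklt(X,\omega),\mathcal O_{\Nqklt(X,\omega)}(M)\right)$ is surjective. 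I then distinguish two cases according to whether $P$ lies on $W:=\Nqklt(X,\omega)$.

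Suppose first that $P\in W$. By adjunction for quasi-log canonical pairs, $[W,\omega|_W]$ is again quasi-log canonical, and since $W$ is a union of proper qlc centers of a surface pair we have $\dim W\leq 1$. Each one-dimensional component $W_j$ of $W$ is a proper qlc center, hence is contained only in two-dimensional components of $X$ and is not itself a component, so $n_{W_j}=2$ and the hypothesis gives $N\cdot W_j\geq 2>1$. Thus the numerical assumptions of Conjecture \ref{a-conj1.2} for $n=1$ are met for $[W,\omega|_W]$, and Theorem \ref{a-thm1.3} shows that the complete linear system of $M|_{W}$ on $W$ is base-point-free. Choosing $t\in H^0(W,\mathcal O_W(M))$ with $t(P)\neq 0$ and lifting it through the surjection above produces the desired $s$.

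Suppose now that $P\notin W$. Then $P$ lies on a single irreducible component $X_0$ and on no other stratum. If $\dim X_0=2$, I would pass to the normalization $\nu\colon\bar X_0\to X_0$ and transport the quasi-log structure by Theorem \ref{u-thm2.11}, obtaining a normal projective irreducible surface $[\bar X_0,\bar\omega]$. The pullback $\bar N=\nu^*N$ satisfies $\bar N^2=N^2\cdot X_0>4$ and $\bar N\cdot\bar C\geq N\cdot\nu_*\bar C\geq 2$ for every curve $\bar C$, since the image curves are proper subvarieties of $X$ with $n=2$; these are exactly the hypotheses of Theorem \ref{a-thm1.4}. That theorem supplies a section of $\mathcal I_{\Nqklt(\bar X_0,\bar\omega)}\otimes\mathcal O_{\bar X_0}(\nu^*M)$ nonzero at a preimage of $P$. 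Because the conductor of $\nu$ is contained in $\Nqklt(\bar X_0,\bar\omega)$, this section vanishes along the conductor and Theorem \ref{u-thm2.11} lets me descend it to a section of $\mathcal I_{W\cap X_0}\otimes\mathcal O_{X_0}(M)$ that is nonzero at $P$; extending by zero across the remaining components, which meet $X_0$ inside $W$, I land in $H^0\!\left(X,\mathcal I_W\otimes\mathcal O_X(M)\right)$ and obtain $s$ with $s(P)\neq 0$. If instead $\dim X_0=1$, the argument is simpler: $N\cdot X_0>1$ lets me apply Theorem \ref{a-thm1.3} to $[X_0,\omega|_{X_0}]$ and then extend and lift as above.

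The hard part will be the two-dimensional subcase of $P\notin W$, namely the passage through the normalization. One must match the quasi-log structures so that Theorem \ref{a-thm1.4} genuinely applies on $\bar X_0$, confirm that the bounds $\bar N^2>4$ and $\bar N\cdot\bar C\geq 2$ survive pullback, and---most delicately---descend the resulting section from $\bar X_0$ to $X_0$ and glue it across the possibly many and possibly lower-dimensional remaining components of $X$. This is exactly the role of our normalization result, Theorem \ref{u-thm2.11}: it guarantees that the conductor is a union of qlc centers and hence lies in $W$, which is what makes both the descent and the Mayer--Vietoris gluing go through.
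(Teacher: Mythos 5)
Your proposal is correct and follows essentially the same route as the paper: a case analysis on whether $P$ meets $\Nqklt(X,\omega)$, Theorem \ref{a-thm1.3} for the one-dimensional strata, normalization via Theorem \ref{u-thm2.11} followed by Theorem \ref{a-thm1.4} in the two-dimensional case, and Theorem \ref{u-thm2.8} (ii) supplying surjectivity of the restriction maps. The only real deviation is your extension-by-zero across the remaining components, which would require checking that the \emph{scheme-theoretic} intersection $X_0\cap\bigcup_{i\ne 0}X_i$ lies in the zero scheme of your section; the paper sidesteps this entirely by taking $X_0$ to be the minimal qlc stratum through $P$ and lifting the section through the vanishing-theorem surjection $H^0(X,\mathcal O_X(M))\to H^0(X_0,\mathcal O_{X_0}(M))$, which you already have at your disposal.
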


We note that we can recover the 
main theorem of \cite{fujino-slc-surface} 
by combining Theorem \ref{a-thm1.3} and Corollary \ref{a-cor1.5} 
with the main result of \cite{fujino-funda-slc}.  

\begin{cor}[{\cite[Theorem 1.3]{fujino-slc-surface}}]\label{a-cor1.6} 
Let $(X, \Delta)$ be a projective 
semi-log canonical 
pair of dimension $n$. 
Let $M$ be a Cartier divisor on $X$. 
We put $N=M-(K_X+\Delta)$. 
Assume that $N^n\cdot X_i>n^n$ for every irreducible 
component $X_i$ of $X$ and 
that $N^k\cdot Z\geq n^k$ for every subvariety $Z$ with $0<\dim Z=k<n$. 
We further assume that 
$n=1$ or $2$. 
Then the complete linear system $|M|$ is basepoint-free.  
\end{cor}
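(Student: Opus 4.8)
The plan is to deduce the statement directly from Conjecture \ref{a-conj1.2} in the cases $n=1,2$, which are already established as Theorem \ref{a-thm1.3} and Corollary \ref{a-cor1.5}. The bridge between semi-log canonical pairs and quasi-log canonical pairs is supplied by the main result of \cite{fujino-funda-slc}: every projective semi-log canonical pair $(X,\Delta)$ carries a natural quasi-log canonical structure on $[X,\omega]$ with $\omega=K_X+\Delta$, whose qlc centers are exactly the semi-log canonical strata of $(X,\Delta)$. Granting this, the corollary becomes a matter of checking that the hypotheses of Corollary \ref{a-cor1.6} translate verbatim into those of Conjecture \ref{a-conj1.2}.

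First I would set $\omega=K_X+\Delta$, so that $N=M-\omega=M-(K_X+\Delta)$ is the same divisor class appearing in both statements. The key structural point is that a semi-log canonical pair is demi-normal, hence equidimensional of pure dimension $n$; consequently every irreducible component $X_i$ of $X$ satisfies $\dim X_i=n$, and every positive-dimensional subvariety $Z$ that is not an irreducible component of $X$ satisfies $\dim Z<n$ and $n_Z=\min_i\{\dim X_i\mid Z\subset X_i\}=n$. Under this identification the hypothesis $N^n\cdot X_i>n^n$ is precisely the component condition $N^{\dim X_i}\cdot X_i>(\dim X_i)^{\dim X_i}$ of Conjecture \ref{a-conj1.2}, while the hypothesis $N^k\cdot Z\geq n^k$ for $0<\dim Z=k<n$ becomes the subvariety condition $N^{\dim Z}\cdot Z\geq n_Z^{\dim Z}$.

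With the hypotheses matched, I would then simply invoke Conjecture \ref{a-conj1.2} for the quasi-log canonical pair $[X,K_X+\Delta]$: Theorem \ref{a-thm1.3} when $n=1$ and Corollary \ref{a-cor1.5} when $n=2$. In either case $|M|$ is basepoint-free, which is exactly the assertion.

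I expect the only genuine difficulty to lie entirely in the input from \cite{fujino-funda-slc}, namely the construction of the quasi-log canonical structure on an slc pair and the identification of its qlc centers with the slc strata, which is a deep result resting on the theory of mixed Hodge structures. Once that is in hand, the proof is a purely formal translation, and the numerical bookkeeping---especially the verification that $n_Z=n$ for every positive-dimensional subvariety that is not an irreducible component, which uses equidimensionality in an essential way---is the only point requiring care.
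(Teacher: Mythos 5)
Your proposal is correct and follows exactly the paper's argument: apply Theorem \ref{u-thm2.10} (the main result of \cite{fujino-funda-slc}) to endow $(X,\Delta)$ with a quasi-log canonical structure $[X,K_X+\Delta]$, and then invoke Theorem \ref{a-thm1.3} for $n=1$ and Corollary \ref{a-cor1.5} for $n=2$. Your explicit check that equidimensionality of a semi-log canonical pair forces $\dim X_i=n$ and $n_Z=n$, so the hypotheses match verbatim, is a detail the paper leaves implicit but is exactly the right bookkeeping.
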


Let us quickly explain 
our strategy to prove Conjecture \ref{a-conj1.2}. 
From now on, we will use the same notation as in Conjecture \ref{a-conj1.2}. 
We take an arbitrary closed point $P$ of $X$. 
Then it is sufficient to find $s\in H^0(X, \mathcal O_X(M))$ with 
$s(P)\ne 0$. 
Let $X_i$ be an irreducible component of $X$ such that 
$P\in X_i$. 
By adjunction (see Theorem \ref{u-thm2.8} (i)), 
$[X_i, \omega|_{X_i}]$ is a quasi-log canonical pair. 
By the vanishing theorem (see Theorem \ref{u-thm2.8} (ii)), 
the natural restriction map 
$H^0(X, \mathcal O_X(M))\to H^0(X_i, \mathcal O_{X_i}(M))$ is 
surjective. Therefore, by replacing $X$ with $X_i$, 
we may assume that $X$ is irreducible. 
By adjunction again, 
$[\Nqklt(X, \omega), \omega|_{\Nqklt(X, \omega)}]$ is a quasi-log 
canonial pair. 
By the vanishing theorem, 
the natural restriction map $H^0(X, \mathcal O_X(M))
\to H^0(\Nqklt(X, \omega), \mathcal O_{\Nqklt(X, \omega)}(M))$ 
is surjective. Therefore, if $P\in \Nqklt(X, \omega)$, 
then we can use induction on the dimension. 
Thus we may further assume that 
$P\not\in\Nqklt(X, \omega)$. 
In this situation, we know that $X$ is normal at $P$. 
Let $\nu: \widetilde X\to X$ be the normalization. 
Then, by Theorem \ref{u-thm2.11}, $[\widetilde X, 
\nu^*\omega]$ is a quasi-log canonical pair with 
$\nu_*\mathcal I_{\Nqklt(\widetilde X, \nu^*\omega)}=\mathcal I_{\Nqklt(X, \omega)}$. 
Therefore, it is sufficient to find $\widetilde s\in 
H^0(\widetilde X, \mathcal I_{\Nqklt(\widetilde X, \nu^*\omega)}\otimes 
\mathcal O_{\widetilde X} (\nu^*M))$ with $\widetilde s(\widetilde P)\ne 0$, 
where $\widetilde P=\nu^{-1}(P)$. 
By replacing $X$ with $\widetilde X$, we may assume that 
$X$ is a normal irreducible variety. 
By using Theorem \ref{u-thm2.12}, we can take a 
boundary $\mathbb R$-divisor $\Delta$, that is, 
an effective $\mathbb R$-divisor $\Delta$ with $\Delta=\Delta^{\leq 1}$, on 
$X$ such that $K_X+\Delta\sim _{\mathbb R} \omega+\varepsilon N$ for 
$0<\varepsilon \ll 1$ and $\mathcal J(X, \Delta)=\mathcal I_{\Nqklt(X, \omega)}$. 
Note that $\mathcal J(X, \Delta)$ is the multiplier ideal sheaf of $(X, \Delta)$. 
Since $\mathcal J(X, \Delta)=\mathcal I_{\Nqklt(X, \omega)}$, $(X, \Delta)$ is 
klt in a neighborhood of $P$. 
Anyway, it is sufficient to find $s\in H^0(X, \mathcal J(X, \Delta)\otimes 
\mathcal O_X(M))$ with $s(P)\ne 0$. 
In this paper, we will carry out the above strategy in dimension two. 

\begin{ack}
The first author was partially supported by JSPS KAKENHI Grant 
Numbers JP16H03925, JP16H06337. 
The authors would like to thank Kenta Hashizume and 
Professor Wenfei Liu for discussions. 
They also thank the referee for valuable comments. 
\end{ack}

We will work over $\mathbb C$, the 
complex number field, throughout this paper. A scheme means
a separated scheme of finite 
type over $\mathbb C$. A variety means a reduced scheme, that is, a
reduced separated scheme of finite 
type over $\mathbb C$. 
We sometimes assume that a variety 
is irreducible 
without mentioning it explicitly if there is no risk of 
confusion. We will freely use the standard notation of 
the minimal model program and the 
theory of quasi-log schemes as in  \cite{fujino-funda} and  \cite{fujino-foundations}.
For the details of semi-log canonical pairs, see \cite{fujino-funda-slc}. 

\section{Preliminaries}\label{a-sec2}

In this section, we collect some basic definitions and 
explain some results on quasi-log schemes. 

\begin{defn}[$\mathbb R$-divisors]\label{u-def2.1}
Let $X$ be an equidimensional variety, 
which is not necessarily regular in codimension one. 
Let $D$ be an $\mathbb R$-divisor, 
that is, $D$ is a finite formal sum $\sum _i d_i D_i$, where 
$D_i$ is an irreducible reduced closed subscheme of $X$ of 
pure codimension one and $d_i$ is a real number for every $i$ 
such that $D_i\ne D_j$ for $i\ne j$. We put 
\begin{equation*}
D^{<1} =\sum _{d_i<1}d_iD_i, \quad 
D^{\leq 1}=\sum _{d_i\leq 1} d_i D_i, \quad 
D^{> 1}=\sum _{d_i>1} d_i D_i, \quad 
\text{and} \quad 
D^{=1}=\sum _{d_i=1}D_i. 
\end{equation*}
We also put 
$$
\lceil D\rceil =\sum _i \lceil d_i \rceil D_i \quad \text{and} 
\quad 
\lfloor D\rfloor=-\lceil -D\rceil, 
$$
where $\lceil d_i\rceil$ is the integer defined by $d_i\leq 
\lceil d_i\rceil <d_i+1$. 
When $D=D^{\leq 1}$ holds, 
we usually say that 
$D$ is a {\em{subboundary}} $\mathbb R$-divisor. 

Let $B_1$ and $B_2$ be $\mathbb R$-Cartier divisors on $X$. 
Then $B_1\sim _{\mathbb R} B_2$ means that 
$B_1$ is $\mathbb R$-linearly equivalent 
to $B_2$. 
\end{defn}

Let us quickly recall singularities of pairs for the reader's convenience. 
We recommend the reader to see \cite[Section 2.3]{fujino-foundations} 
for the details.  

\begin{defn}[Singularities of pairs]\label{u-def2.2} 
Let $X$ be a normal variety and let $\Delta$ be 
an $\mathbb R$-divisor on $X$ such that 
$K_X+\Delta$ is $\mathbb R$-Cartier. 
Let $f:Y\to X$ be a projective 
birational morphism from a smooth variety $Y$. 
Then we can write 
$$
K_Y=f^*(K_X+\Delta)+\sum _E a(E, X, \Delta)E, 
$$ 
where $a(E, X, \Delta)\in \mathbb R$ and $E$ is a prime 
divisor on $Y$. 
By taking $f:Y\to X$ suitably, 
we can define $a(E, X, \Delta)$ for any prime 
divisor $E$ {\em{over}} $X$ and call it 
the {\em{discrepancy}} of $E$ with respect to $(X, \Delta)$. 
If $a(E, X, \Delta)>-1$ (resp.~$a(E, X, \Delta)\geq -1$) holds 
for any prime divisor $E$ over $X$, then 
we say that $(X, \Delta)$ is {\em{sub klt}} (resp.~{\em{sub 
log canonical}}). 
If $(X, \Delta)$ is sub klt (resp.~sub log canonical) 
and $\Delta$ is effective, then we say that 
$(X, \Delta)$ is {\em{klt}} (resp.~{\em{log canonical}}). 
If $(X, \Delta)$ is log canonical 
and $a(E, X, \Delta)>-1$ for any prime divisor 
$E$ that is exceptional over $X$, 
then we say that $(X, \Delta)$ is {\em{plt}}. 

If there exist a projective birational morphism 
$f:Y\to X$ from a smooth 
variety $Y$ and a prime divisor $E$ on $Y$ such that 
$a(E, X, \Delta)=-1$ and $(X, \Delta)$ is log canonical 
in a neighborhood of the generic point of $f(E)$, 
then $f(E)$ is called a {\em{log canonical center}} of 
$(X, \Delta)$.  
\end{defn}

\begin{defn}[Multiplier ideal sheaves]\label{u-def2.3}
Let $X$ be a normal variety and let $\Delta$ 
be an effective $\mathbb R$-divisor 
on $X$ such that $K_X+\Delta$ is $\mathbb R$-Cartier. 
Let $f:Y\to X$ be a projective 
birational morphism 
from a smooth variety 
such that 
$$
K_Y+\Delta_Y=f^*(K_X+\Delta)
$$ 
and $\Supp \Delta_Y$ is a simple normal crossing 
divisor on $Y$. 
We put 
$$
\mathcal J(X, \Delta)=f_*\mathcal O_Y(-\lfloor \Delta_Y\rfloor)
$$ 
and call it the {\em{multiplier ideal 
sheaf}} of $(X, \Delta)$. 
We can easily check that 
$\mathcal J(X, \Delta)$ is a well-defined 
ideal sheaf on $X$. 
The closed subscheme defined by 
$\mathcal J(X, \Delta)$ is denoted by 
$\Nklt(X, \Delta)$. 
\end{defn}

The notion of {\em{globally embedded simple normal crossing 
pairs}} plays a crucial role in the theory of quasi-log schemes 
described in \cite[Chapter 6]{fujino-foundations}. 

\begin{defn}[Globally embedded simple normal 
crossing pairs]\label{u-def2.4} 
Let $Y$ be a simple normal crossing 
divisor on a smooth variety $M$ and let $B$ be 
an $\mathbb R$-divisor 
on $M$ such that 
$Y$ and $B$ have no common irreducible components and 
that the support of $Y+B$ is a simple normal crossing divisor on $M$. In this 
situation, $(Y, B_Y)$, where $B_Y:=B|_Y$, 
is called a {\em{globally embedded simple 
normal crossing pair}}. 
A {\em{stratum}} of $(Y, B_Y)$ means a log canonical 
center of $(M, Y+B)$ included in $Y$.  
\end{defn}

Let us recall the notion of {\em{quasi-log schemes}}, 
which was first introduced by Florin Ambro (see \cite{ambro}). 
The following definition is slightly different from 
the original one. 
For the details, see \cite[Appendix A]{fujino-pull-back}. 
In this paper, 
we will use the framework of quasi-log schemes 
established in 
\cite[Chapter 6]{fujino-foundations}. 

\begin{defn}[Quasi-log schemes]\label{u-def2.5} 
A {\em{quasi-log scheme}} is a scheme $X$ endowed with 
an $\mathbb R$-Cartier divisor (or $\mathbb R$-line bundle) $\omega$ on $X$, 
a closed subscheme $\Nqlc(X, \omega)\subsetneq X$,  
and a finite collection $\{C\}$ of reduced and irreducible subschemes 
of $X$ such that there exists a proper morphism 
$f:(Y, B_Y)\to X$ from a globally embedded simple normal 
crossing pair $(Y, B_Y)$ satisfying 
the following properties: 
\begin{itemize}
\item[(1)] $f^*\omega\sim_{\mathbb R} K_Y+B_Y$. 
\item[(2)] The natural map 
$\mathcal O_X\to f_*\mathcal O_Y(\lceil -(B^{<1}_Y)\rceil)$ induces 
an isomorphism 
$$
\mathcal I_{\Nqlc(X, \omega)}\overset{\sim}\longrightarrow
f_*\mathcal O_Y(\lceil -(B^{<1}_Y)\rceil-\lfloor B^{>1}_Y\rfloor), 
$$ 
where $\mathcal I_{\Nqlc(X, \omega)}$ is the defining ideal sheaf of 
$\Nqlc(X, \omega)$. 
\item[(3)] The collection of subvarieties $\{C\}$ 
coincides with the images of $(Y, B_Y)$-strata that are 
not included in $\Nqlc(X, \omega)$.  
\end{itemize}
We simply write $[X, \omega]$ to denote 
the above data 
$$\left(X, \omega, f:(Y, B_Y)\to X\right)
$$ 
if there is no risk of confusion. 
We note that the subvarieties $C$ are called the {\em{qlc strata}} 
of $\left(X, \omega, f:(Y, B_Y)\to X\right)$ or simply of $[X, \omega]$. 
If $C$ is a qlc stratum of $[X, \omega]$ but is not an irreducible 
component of $X$, then $C$ is called a {\em{qlc center}} 
of $[X, \omega]$. 
The union of all qlc centers of $[X, \omega]$ is denoted by 
$\Nqklt(X, \omega)$. 
\end{defn}

If $B_Y$ is a subboundary $\mathbb R$-divisor, 
then $[X, \omega]$ in Definition \ref{u-def2.5} is 
called a {\em{quasi-log canonical pair}}. 

\begin{defn}[Quasi-log canonical pairs]\label{u-def2.6}
Let $(X, \omega, f:(Y, B_Y)\to X)$ be a quasi-log scheme as 
in Definition \ref{u-def2.5}. 
We say that $(X, \omega, f:(Y, B_Y)\to X)$ or simply $[X, \omega]$ 
is a {\em{quasi-log canonical pair}} ({\em{qlc pair}}, for short) 
if $\Nqlc(X, \omega)=\emptyset$. 
Note that the condition $\Nqlc(X, \omega)=\emptyset$ is equivalent 
to $B^{>1}_Y=0$, that is, $B_Y=B^{\leq 1}_Y$. 
\end{defn}

The following example is very important. 
Precisely speaking, the notion of quasi-log schemes was 
originally introduced by Florin Ambro (see \cite{ambro}) 
in order to establish the cone and contraction theorem 
for {\em{generalized log varieties}}. 
Note that a generalized log variety $(X, \Delta)$ means 
that $X$ is a normal variety and $\Delta$ is an effective 
$\mathbb R$-divisor on $X$ such that 
$K_X+\Delta$ is $\mathbb R$-Cartier as in Example \ref{u-ex2.7} below. 

\begin{ex}\label{u-ex2.7}
Let $X$ be a normal irreducible variety 
and let $\Delta$ be an effective $\mathbb R$-divisor 
on $X$ such that 
$K_X+\Delta$ is $\mathbb R$-Cartier. 
Let $f:Y\to X$ be a projective 
birational morphism from a smooth variety $Y$. 
We define $\Delta_Y$ by $$K_Y+\Delta_Y=f^*(K_X+\Delta). $$
We may assume that $\Supp \Delta_Y$ is a simple 
normal crossing divisor 
on $Y$ by taking $f:Y\to X$ suitably. 
We put $M=Y\times \mathbb C$ and 
consider $Y\simeq Y\times \{0\}\hookrightarrow 
Y\times \mathbb C=M$. 
Then we can see $(Y, \Delta_Y)$ as a globally embedded 
simple normal crossing pair. 
We put $\omega:=K_X+\Delta$ and 
$$\mathcal I_{\Nqlc(X, \omega)}:=f_*\mathcal O_Y(\lceil 
-(\Delta^{<1}_Y)\rceil -\lfloor \Delta^{>1}_Y\rfloor)\subset 
\mathcal O_X. $$ 
Then $\left(X, \omega, f: (Y, \Delta_Y)\to X\right)$ is a quasi-log 
scheme. 
In this case, $C$ is a qlc center of $[X, \omega]$ 
if and only if $C$ is a log canonical center of $(X, \Delta)$. 
If $C$ is a qlc stratum but is not a qlc center of $[X, \omega]$, 
then $C$ is nothing but $X$. 
\end{ex}

One of the most important results in the theory of quasi-log schemes 
is the following theorem. 

\begin{thm}\label{u-thm2.8} 
Let $[X,\omega]$ be a 
quasi-log scheme and let $X'$ be the union of $\Nqlc(X, \omega)$ with a 
$($possibly empty$)$ union 
of some qlc strata of $[X,\omega]$. 
Then we have the following properties.
\begin{itemize}
\item[(i)] {\em{(Adjunction)}}.~Assume 
that $X'\neq \Nqlc(X, \omega)$. 
Then $[X', \omega']$ is a quasi-log scheme with $\omega'=\omega \vert_{X'}$ 
and $\Nqlc(X', \omega')=\Nqlc(X, \omega)$. Moreover, the qlc 
strata of $[X',\omega']$ are exactly the qlc strata 
of $[X,\omega]$ that are included in $X'$.
\item[(ii)] {\em{(Vanishing theorem)}}.~Assume 
that $ \pi: X \rightarrow S$ is a proper 
morphism between schemes. 
Let $L$ be a Cartier divisor on $X$ such that 
$L-\omega$ is nef and log big over $S$ 
with respect to $[X,\omega]$, that is, 
$L-\omega$ is $\pi$-nef and 
$(L-\omega)|_C$ is $\pi$-big for every qlc stratum $C$ of $[X, \omega]$. 
Then $R^{i}\pi_{*}(\mathcal {I}_{X'}\otimes \mathcal{O}_{X}(L))=0$ 
for every $i>0$, where $\mathcal{I}_{X'}$ is the 
defining ideal sheaf of $X'$ on $X$.
\end{itemize}
\end{thm}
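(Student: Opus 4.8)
The plan is to work entirely on the globally embedded simple normal crossing pair $f\colon (Y, B_Y)\to X$ underlying the quasi-log structure of $[X,\omega]$, and to prove the two parts simultaneously, since the ideal-sheaf identity required by Definition \ref{u-def2.5}(2) for $[X', \omega']$ is exactly the kind of statement that the vanishing theorem (ii) controls. First I would fix notation, writing $B_Y = B_Y^{<1} + B_Y^{=1} + B_Y^{>1}$ and recalling that by Definition \ref{u-def2.5}(3) the qlc strata of $[X,\omega]$ are the $f$-images of the strata of $(Y, B_Y)$ not contained in $\Nqlc(X,\omega)$, while $\Nqlc(X,\omega)$ is cut out by $f_*\mathcal O_Y(\lceil -B_Y^{<1}\rceil - \lfloor B_Y^{>1}\rfloor)$.

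For adjunction (i), let $\{C_j\}$ be the qlc strata of $[X,\omega]$ contained in $X'$, so that $X'$ is the union of the $C_j$ with $\Nqlc(X,\omega)$. I would define $Y'\subset Y$ to be the union of those strata of $(Y, B_Y)$ whose $f$-images lie in $X'$. The first point is that $(Y', B_{Y'})$, with $B_{Y'}$ induced by adjunction along $Y'$, is again a globally embedded simple normal crossing pair in the sense of Definition \ref{u-def2.4}; this is a combinatorial check on the stratification together with the standard behaviour of $K+B$ under restriction to a union of strata. Property (1) for $[X',\omega']$ then follows from $f^*\omega\sim_{\mathbb R} K_Y+B_Y$ by restricting to $Y'$, and property (3) is immediate, since the strata of $(Y',B_{Y'})$ are precisely the strata of $(Y, B_Y)$ contained in $Y'$.

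The genuine content of (i) is property (2): the identification $\mathcal I_{\Nqlc(X',\omega')}\cong (f|_{Y'})_*\mathcal O_{Y'}(\lceil -(B_{Y'}^{<1})\rceil - \lfloor B_{Y'}^{>1}\rfloor)$ together with $\Nqlc(X',\omega')=\Nqlc(X,\omega)$. I would derive this from the short exact sequence on $Y$ relating $\mathcal O_Y(\lceil -B_Y^{<1}\rceil - \lfloor B_Y^{>1}\rfloor)$, its restriction to $Y'$, and the corresponding sheaf on the union of the remaining strata, and then apply $f_*$. That $f_*$ carries this to a short exact sequence — in particular surjectivity at the end and the correct identification of the image ideal — is exactly where a vanishing/injectivity input is needed: the relevant connecting maps must vanish, and this is controlled by the fundamental theorems for simple normal crossing pairs. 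This is also where $\Nqlc(X,\omega)$ is seen to be preserved, because the $B_Y^{>1}$-part is untouched by passing to $Y'$.

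This brings me to (ii), the engine of the whole theorem. I would first reduce $R^i\pi_*(\mathcal I_{X'}\otimes \mathcal O_X(L))=0$ to a vanishing statement on $Y$: identifying $\mathcal I_{X'}$ as a pushforward from $Y$ of a sheaf of the same type (now cutting out the strata mapping to $X'$), then using a Leray spectral sequence for $\pi\circ f$ together with the relative vanishing of the higher direct images $R^q f_*$ of these sheaves, the problem becomes the vanishing of $R^i(\pi\circ f)_*$ of the twist by $f^*L$ of the sheaf $\mathcal O_Y(\lceil -B_Y^{<1}\rceil - \lfloor B_Y^{>1}\rfloor)$ relative to $Y'$. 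Here the hypothesis that $L-\omega$ is nef and log big over $S$ provides exactly the positivity needed to invoke the fundamental vanishing theorem for globally embedded simple normal crossing pairs, which ultimately rests on the theory of variations of mixed Hodge structure (the $E_1$-degeneration and strictness yielding the Esnault--Viehweg--Fujino-type injectivity and vanishing theorems). The main obstacle is precisely this interlocking: (i) and (ii) are not independent, so I expect the cleanest route is an induction on the number of qlc strata adjoined to $\Nqlc(X,\omega)$, reducing to the case where $X'$ is obtained by adding a single minimal qlc stratum; there the exact sequence has only two nontrivial terms and the Hodge-theoretic vanishing on $(Y,B_Y)$ applies most transparently, simultaneously delivering the surjectivity needed for (2) and the higher vanishing of (ii), after which the general case follows by peeling off strata one at a time.
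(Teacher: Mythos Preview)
The paper does not actually prove Theorem~\ref{u-thm2.8}: immediately after the statement it says ``For the proof of Theorem~\ref{u-thm2.8}, see, for example, \cite[Theorem 6.3.5]{fujino-foundations}.'' So there is no ``paper's own proof'' to compare against beyond that citation.

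That said, your outline is broadly in line with how the argument in \cite[Chapter 6]{fujino-foundations} runs: one works on the model $f\colon (Y,B_Y)\to X$, constructs from $Y$ a new globally embedded simple normal crossing pair mapping to $X'$, checks (1) and (3) combinatorially, and derives (2) together with the vanishing from the torsion-free and vanishing theorems for such pairs (the Hodge-theoretic input from \cite[Chapter 5]{fujino-foundations}). Two technical points you should be aware of if you ever flesh this out. First, the subvariety you call $Y'$ is not obtained simply by taking the union of strata of $(Y,B_Y)$ mapping into $X'$; in general one must first replace $(Y,B_Y)$ by a suitable blow-up so that the relevant strata form a union of irreducible components of $B_Y^{=1}$, and then $Y'$ is cut out as a globally embedded simple normal crossing pair inside the ambient $M$. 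Second, the interlocking you describe between (i) and (ii) is not really resolved by induction on the number of strata added: in the treatment of \cite{fujino-foundations} the ideal-sheaf identity in (2) and the higher vanishing in (ii) are both consequences of the same package of results for simple normal crossing pairs (Koll\'ar-type torsion-freeness to control $R^qf_*$ and get exactness after pushforward, and the vanishing theorem to kill $R^i(\pi\circ f)_*$), applied once, rather than by an inductive peeling argument. Your sketch is a reasonable roadmap, but the actual proof is less circular than you suggest.
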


For the proof of Theorem \ref{u-thm2.8}, see, 
for example, \cite[Theorem 6.3.5]
{fujino-foundations}. 
We note that we generalized Koll\'ar's torsion-free and 
vanishing theorems in \cite[Chapter 5]{fujino-foundations} 
by using the theory of mixed Hodge structures on cohomology with 
compact support 
in order to establish Theorem \ref{u-thm2.8}. 

\medskip

Let us quickly recall the definition of semi-log canonical 
pairs for the reader's convenience. 

\begin{defn}[Semi-log canonical pairs]\label{u-def2.9}
Let $X$ be an equidimensional 
variety that is normal crossing in codimension one 
and satisfies Serre's $S_2$ condition and 
let $\Delta$ be an effective $\mathbb R$-divisor 
on $X$ such that 
the singular locus of $X$ contains no irreducible components 
of $\Supp \Delta$. 
Assume that $K_X+\Delta$ is $\mathbb R$-Cartier. 
Let $\nu:\widetilde X\to X$ be the normalization. 
We put $K_{\widetilde X}+\Delta_{\widetilde X}=\nu^*(K_X+\Delta)$, 
that is, $\Delta_{\widetilde X}$ is the union of 
the inverse images of $\Delta$ and the conductor of 
$X$. 
If $(\widetilde X, \Delta_{\widetilde X})$ is log canonical, 
then $(X, \Delta)$ is called a {\em{semi-log 
canonical pair}}. 
\end{defn}

The theory of quasi-log schemes plays an important role 
for the study of semi-log canonical 
pairs by the following theorem:~Theorem \ref{u-thm2.10}. 
For the precise statement and some related results, 
see \cite{fujino-funda-slc}. 

\begin{thm}[{\cite[Theorem 1.2]{fujino-funda-slc}}]\label{u-thm2.10}
Let $(X, \Delta)$ be a quasi-projective 
semi-log canonical pair. 
Then $[X, K_X+\Delta]$ is a quasi-log canonical pair. 
\end{thm}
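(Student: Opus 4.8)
The plan is to build the required globally embedded simple normal crossing pair from a log resolution of the normalization and to realize the non-normality of $X$ as an internal double locus of that pair. First I would pass to the normalization $\nu:\widetilde X\to X$ and recall from Definition \ref{u-def2.9} that $(\widetilde X,\Delta_{\widetilde X})$ is log canonical, where $\Delta_{\widetilde X}=\nu_*^{-1}\Delta+\bar D$ and $\bar D$ is the conductor divisor, occurring with coefficient one. The hypotheses that $X$ is $S_2$ and normal crossing in codimension one guarantee that $D:=\nu(\bar D)$, the non-normal locus of $X$, is a reduced divisor and that $X$ is recovered as the pushout of the conductor square; concretely there is an exact sequence $0\to\mathcal O_X\to\nu_*\mathcal O_{\widetilde X}\to\nu_*\mathcal O_{\bar D}/\mathcal O_D\to 0$ exhibiting $\mathcal O_X$ as the sheaf of sections of $\nu_*\mathcal O_{\widetilde X}$ whose restriction to $\bar D$ descends along $\bar D\to D$, i.e. is compatible with the gluing involution $\tau$ of the normalized conductor.

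Next I would take a log resolution $g:Z\to\widetilde X$ of $(\widetilde X,\Delta_{\widetilde X})$ and set $K_Z+B_Z=g^*(K_{\widetilde X}+\Delta_{\widetilde X})$, so that $(Z,B_Z)$ is a globally embedded simple normal crossing pair (viewing $Z\hookrightarrow Z\times\mathbb C$ as in Example \ref{u-ex2.7}) with $B_Z=B_Z^{\le 1}$ by log canonicity. To account for the gluing I would not simply use $\nu\circ g$, but instead form a simple normal crossing variety $Y$ by gluing $Z$ to itself along the strict transform and exceptional locus of $\bar D$ via $\tau$, obtaining a proper surjective $f:Y\to X$ in which the conductor becomes an internal stratum; equivalently, one takes a semi-resolution of $X$ itself whose double locus reflects $D$. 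With $B_Y$ induced by $B_Z$, conditions (1) and (3) of Definition \ref{u-def2.5} are then routine: (1) holds because $B_Y$ is defined by pulling back $K_X+\Delta$, and (3) holds because the strata of $(Y,B_Y)$ map onto the log canonical centers of $(\widetilde X,\Delta_{\widetilde X})$ and hence onto the expected qlc strata of $[X,K_X+\Delta]$. Since $B_Y=B_Y^{\le 1}$, the pair will be quasi-log \emph{canonical} once $\Nqlc=\emptyset$ is established.

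The heart of the matter is condition (2), the isomorphism $\mathcal O_X\xrightarrow{\sim}f_*\mathcal O_Y(\lceil -(B_Y^{<1})\rceil)$ together with $\Nqlc(X,K_X+\Delta)=\emptyset$ (the term $\lfloor B_Y^{>1}\rfloor$ vanishes by log canonicity). The subtlety is that $\bar D$ has coefficient one, hence lies in $B_Y^{=1}$ and is invisible to $\lceil -(B_Y^{<1})\rceil$; consequently a naive resolution of $\widetilde X$ would only produce $\nu_*\mathcal O_{\widetilde X}$, which is strictly larger than $\mathcal O_X$. The role of the self-gluing along $\tau$ is precisely that $f_*\mathcal O_Y(\lceil -(B_Y^{<1})\rceil)$ singles out the $\tau$-compatible sections, and I would verify this by restricting to the resolved conductor and comparing with the exact sequence above. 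The essential input here is a Koll\'ar-type torsion-freeness and injectivity statement — the generalization via mixed Hodge structures on cohomology with compact support underlying Theorem \ref{u-thm2.8} — which guarantees that the restriction map to the conductor is surjective with the expected kernel, so that the descent condition cuts $\nu_*\mathcal O_{\widetilde X}$ down to exactly $\mathcal O_X$.

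I expect the main obstacle to be exactly this verification of condition (2): controlling the gluing so that the pushforward recovers $\mathcal O_X$ rather than the larger sheaf $\nu_*\mathcal O_{\widetilde X}$, while simultaneously checking that $Y$ is still a genuine globally embedded simple normal crossing pair after the self-gluing. This is where Koll\'ar's gluing theory and the Hodge-theoretic injectivity theorem do the real work, the remaining conditions being comparatively formal.
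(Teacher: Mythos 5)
First, a point of order: the paper contains no proof of Theorem \ref{u-thm2.10}; it is imported verbatim from \cite[Theorem 1.2]{fujino-funda-slc}, so there is no in-paper argument to compare yours against, and your proposal has to be measured against the proof given in that reference.

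Your overall strategy --- normalize, take a log resolution, and account for the identification along the conductor so that condition (2) of Definition \ref{u-def2.5} returns $\mathcal O_X$ rather than the larger sheaf $\nu_*\mathcal O_{\widetilde X}$ --- is the right one, and you have correctly isolated the two genuine difficulties. But as written the plan has a gap at the very step you declare to be the construction of $Y$. Gluing the log resolution $Z$ to itself along the strict transform of $\bar D$ via the involution $\tau$ produces, in general, a normal crossing variety that is not a \emph{simple} normal crossing variety (a component may meet itself along the double locus) and, more seriously, one that need not be embeddable as a simple normal crossing divisor in any smooth ambient variety, nor even be quasi-projective. Abstract gluings of this kind are precisely the pathology that the notion of globally embedded simple normal crossing pair (Definition \ref{u-def2.4}) is designed to exclude, and the Hodge-theoretic injectivity, torsion-freeness, and vanishing theorems underlying Theorem \ref{u-thm2.8} are established only for such pairs. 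Making the glued object fit into this framework is where the quasi-projectivity hypothesis and the embedded (semi-)resolution results enter in \cite{fujino-funda-slc}; your sketch treats this as a formality, whereas it is the main technical content. Second, the verification of condition (2) --- that $f_*\mathcal O_Y(\lceil -(B^{<1}_Y)\rceil)$ consists exactly of the $\tau$-compatible sections of $\nu_*\mathcal O_{\widetilde X}$ and hence equals $\mathcal O_X$ via the conductor exact sequence and the $S_2$ hypothesis --- is only asserted, with an unspecified appeal to ``a Koll\'ar-type torsion-freeness and injectivity statement.'' Since, as you yourself observe, the conductor lies in $B^{=1}_Y$ and is invisible to $\lceil -(B^{<1}_Y)\rceil$, this is the heart of the proof and cannot be waved through. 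Until these two points are actually supplied, what you have is a correct map of where the difficulties lie rather than a proof.
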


For the proof of Corollary \ref{a-cor1.5}, we will 
use Theorem \ref{u-thm2.11} below. 

\begin{thm}[{\cite[Theorem 1.1]{fujino-haidong}}]\label{u-thm2.11} 
Let $[X, \omega]$ be a quasi-log canonical pair such that 
$X$ is irreducible. 
Let $\nu: \widetilde{X}\to X$ be the normalization. 
Then $[\widetilde{X}, \nu^*\omega]$ naturally becomes a 
quasi-log canonical pair with 
the following properties: 
\begin{itemize}
\item[(i)] if $C$ is a qlc center of $[\widetilde{X}, \nu^*\omega]$, 
then $\nu(C)$ is a qlc center of $[X, \omega]$, 
and 
\item[(ii)] $\Nqklt(\widetilde{X}, \nu^*\omega)=\nu^{-1}(\Nqklt(X, \omega))$. 
More precisely, the equality 
$$
\nu_*\mathcal I_{\Nqklt(\widetilde{X}, \nu^*\omega)}=
\mathcal I_{\Nqklt(X, \omega)}
$$ 
holds, where 
$\mathcal I_{\Nqklt(X, \omega)}$ 
and $\mathcal I_{\Nqklt(\widetilde{X}, \nu^*\omega)}$ are the defining ideal sheaves of 
$\Nqklt(X, \omega)$ and $\Nqklt(\widetilde{X}, \nu^*\omega)$ respectively. 
\end{itemize}  
\end{thm}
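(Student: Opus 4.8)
The plan is to build the quasi-log structure on $\widetilde X$ directly out of a structure morphism of $[X,\omega]$, by normalizing its source and correcting the boundary by the conductor. Fix a globally embedded simple normal crossing pair $(Y,B_Y)$ and a proper surjective $f\colon (Y,B_Y)\to X$ defining $[X,\omega]$; thus $B_Y$ is a subboundary, $f^{*}\omega\sim_{\mathbb R}K_{Y}+B_{Y}$, and $f_{*}\mathcal O_{Y}(\lceil -(B_{Y}^{<1})\rceil)=\mathcal O_{X}$ by condition~(2) of Definition~\ref{u-def2.5} with $\Nqlc(X,\omega)=\emptyset$. Since $X$ is irreducible, at least one component of $Y$ dominates $X$. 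Let $\nu_{Y}\colon \widetilde Y\to Y$ be the normalization; as $Y$ is simple normal crossing, $\widetilde Y$ is the disjoint union of the smooth components of $Y$ and the conductor $D\subset\widetilde Y$ is the reduced preimage of the double locus. I define $B_{\widetilde Y}$ by the crepant equation $K_{\widetilde Y}+B_{\widetilde Y}=\nu_{Y}^{*}(K_{Y}+B_{Y})$, so $B_{\widetilde Y}=\nu_{Y}^{*}B_{Y}+D$; the key point is that $D$ enters with coefficient one, so $B_{\widetilde Y}$ is again a subboundary and $(\widetilde Y,B_{\widetilde Y})$ is, after the evident re-embedding, a globally embedded simple normal crossing pair.

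Next I would lift $f\circ\nu_{Y}$ to a morphism $g\colon \widetilde Y\to\widetilde X$. On each component of $\widetilde Y$ dominating $X$ the universal property of normalization supplies a unique lift to $\widetilde X$. The remaining components map onto qlc centers contained in the non-normal locus of $X$, where the lift is ambiguous because $\nu$ has several branches; to record all branches simultaneously I would instead take $\widetilde Y$ to be a simple normal crossing model of the normalization of $Y\times_{X}\widetilde X$ and let $g$ be induced by the second projection. Either way one obtains a proper morphism $g\colon (\widetilde Y,B_{\widetilde Y})\to\widetilde X$ with $g^{*}(\nu^{*}\omega)=f^{*}\omega\sim_{\mathbb R}K_{\widetilde Y}+B_{\widetilde Y}$, which is condition~(1) of Definition~\ref{u-def2.5} for $[\widetilde X,\nu^{*}\omega]$.

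It then remains to check that $g$ defines a \emph{quasi-log canonical} structure and to read off (i) and (ii). For condition~(2) I must show $\Nqlc(\widetilde X,\nu^{*}\omega)=\emptyset$, i.e.\ $g_{*}\mathcal O_{\widetilde Y}(\lceil -(B_{\widetilde Y}^{<1})\rceil)=\mathcal O_{\widetilde X}$. Applying $\nu_{*}$ and using the functoriality $\nu_{*}\circ g_{*}=f_{*}\circ(\nu_{Y})_{*}$ reduces this to the known identity $f_{*}\mathcal O_{Y}(\lceil -(B_{Y}^{<1})\rceil)=\mathcal O_{X}$ together with a local computation on the normal variety $\widetilde X$ showing that the conductor $D$ exactly accounts for the gap between $\mathcal O_{X}$ and $\nu_{*}\mathcal O_{\widetilde X}$ along the non-normal locus; note that the naive boundary $\nu_{Y}^{*}B_{Y}$ without $D$ would yield a sheaf strictly larger than $\mathcal O_{\widetilde X}$, so the correction is forced. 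For (i), every stratum of $(\widetilde Y,B_{\widetilde Y})$ maps under $\nu\circ g$ onto the image of a stratum of $(Y,B_{Y})$, hence onto a qlc stratum of $[X,\omega]$, so the $\nu$-image of a qlc center of $[\widetilde X,\nu^{*}\omega]$ is a qlc center of $[X,\omega]$. For (ii) I would express both $\mathcal I_{\Nqklt(X,\omega)}$ and $\mathcal I_{\Nqklt(\widetilde X,\nu^{*}\omega)}$ as pushforwards from $Y$ and $\widetilde Y$ of the corresponding round-up sheaves with the reduced coefficient-one part removed, and deduce $\nu_{*}\mathcal I_{\Nqklt(\widetilde X,\nu^{*}\omega)}=\mathcal I_{\Nqklt(X,\omega)}$ from the same functoriality.

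The main obstacle is the combination of the multi-branch lifting over the non-normal locus in the second step and the sheaf identity for condition~(2) in the third: the heart of the matter is the precise matching between the conductor of $Y$ and the non-normal locus of $X$---itself a union of qlc centers---which is what makes the conductor-corrected boundary $B_{\widetilde Y}$ produce exactly $\mathcal O_{\widetilde X}$ and hence $\Nqlc(\widetilde X,\nu^{*}\omega)=\emptyset$.
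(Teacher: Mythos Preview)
This theorem is not proved in the present paper; it is quoted as \cite[Theorem~1.1]{fujino-haidong} and used as a black box in the proof of Corollary~\ref{a-cor3.3}, so there is no in-paper argument against which to compare your proposal.

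Taken on its own, your outline has the right shape but leaves the two points you yourself label ``the main obstacle'' unresolved, and they are genuine gaps rather than routine checks. For the lifting step, the universal property of normalization gives a factorization through $\widetilde X$ only on the components of $\widetilde Y$ that dominate $X$; your fallback of passing to an snc model of $Y\times_X\widetilde X$ discards the crepant formula $K_{\widetilde Y}+B_{\widetilde Y}=\nu_Y^{*}(K_Y+B_Y)$ set up in your first paragraph, and after base change plus further blow-ups the boundary has to be recomputed from scratch and the subboundary condition re-verified---precisely over the non-normal locus of $X$, where coefficients can jump above $1$. For the sheaf identity, applying $\nu_*$ to the desired equality $g_*\mathcal O_{\widetilde Y}(\lceil-(B_{\widetilde Y}^{<1})\rceil)=\mathcal O_{\widetilde X}$ only shows that both sides have the same pushforward to $X$, which does not determine the sheaf on $\widetilde X$; an argument intrinsic to $\widetilde X$ is needed. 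The proof in \cite{fujino-haidong} sidesteps the first difficulty by using the freedom in the choice of quasi-log resolution (cf.\ \cite[Chapter~6]{fujino-foundations}) to arrange that every irreducible component of $Y$ already dominates $X$, so that $f$ itself factors through $\widetilde X$ by the universal property and no fibre product or conductor correction is needed; items (i) and (ii) then follow from a direct comparison of the defining ideal sheaves on the two sides.
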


The following theorem is a special case of \cite[Theorem 1.5]
{fujino-slc-trivial}. It is a deep result 
based on the theory of variations of mixed Hodge structure 
on cohomology with compact support. 

\begin{thm}[{\cite[Theorem 1.5]{fujino-slc-trivial}}]\label{u-thm2.12} 
Let $[X, \omega]$ be a quasi-log canonical pair such that 
$X$ is a normal projective irreducible variety. 
Then there exists a projective 
birational morphism 
$p:X'\to X$ from a smooth projective 
variety $X'$ such that 
$$
K_{X'}+B_{X'}+M_{X'}=p^*\omega, 
$$ 
where $B_{X'}$ is a subboundary $\mathbb R$-divisor, that is, 
$B_{X'}=B^{\leq 1}_{X'}$, 
such that $\Supp B_{X'}$ is a simple normal crossing 
divisor and that $p_*B_{X'}$ is effective,  
and $M_{X'}$ is a nef $\mathbb R$-divisor on $X'$. 
Furthermore, we can make $B_{X'}$ satisfy 
$p(B^{=1}_{X'})=\Nqklt(X, \omega)$.  
\end{thm}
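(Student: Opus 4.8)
The plan is to prove Theorem~\ref{u-thm2.12} by running a canonical bundle formula for the defining morphism of the quasi-log canonical structure and then invoking Hodge-theoretic semipositivity for the moduli part. By Definition~\ref{u-def2.5} and Definition~\ref{u-def2.6}, the pair $[X,\omega]$ comes with a proper morphism $f\colon (Y,B_Y)\to X$ from a globally embedded simple normal crossing pair with $f^*\omega\sim_{\mathbb R}K_Y+B_Y$ and $B_Y=B_Y^{\le 1}$ a subboundary. After replacing $(Y,B_Y)$ and $f$ by a suitable model (blowing up $Y$ and compactifying the ambient smooth variety so that $Y$ is projective, $\Supp B_Y$ is simple normal crossing, and $f$ is projective), I would view $f$ as an $\mathbb R$-divisorially trivial fibration over $X$: indeed $K_Y+B_Y-f^*\omega\sim_{\mathbb R}0$, so $K_Y+B_Y\sim_{\mathbb R,f}0$, and the isomorphism in Definition~\ref{u-def2.5}(2) (with $B_Y^{>1}=0$ in the qlc case) is exactly the compatibility needed to run the subadjunction/canonical bundle formula machinery for such fibrations.

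The construction proper goes as follows. For every prime divisor $D$ over $X$ I define the discriminant coefficient $b_D=1-\gamma_D$, where $\gamma_D$ is the log canonical threshold of $f^*D$ with respect to $(Y,B_Y)$ over the generic point of $D$; choosing a sufficiently high projective birational model $p\colon X'\to X$ with $X'$ smooth and log smooth, these coefficients assemble into an $\mathbb R$-divisor $B_{X'}$ on $X'$ with $\Supp B_{X'}$ simple normal crossing. I then \emph{define} the moduli part by $M_{X'}:=p^*\omega-K_{X'}-B_{X'}$, so that the claimed identity $K_{X'}+B_{X'}+M_{X'}=p^*\omega$ holds tautologically. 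That $B_{X'}$ is a subboundary ($B_{X'}=B_{X'}^{\le 1}$) is immediate from $\gamma_D\ge 0$, that is, from the sub-log-canonicity of $(Y,B_Y)$; simple normal crossing support is arranged by the log resolution; and the effectivity of $p_*B_{X'}$ is a bookkeeping property of the discriminant for this class of fibrations, which one secures after adjusting $B_Y$ within its relative $\mathbb R$-linear equivalence class if necessary.

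The main obstacle---and the genuinely deep point---is the nefness of $M_{X'}$. The plan is to identify $M_{X'}$, up to $\mathbb R$-linear equivalence on a high enough model, with a Hodge-theoretic line bundle: the descent of the bottom piece of the Hodge filtration (equivalently a suitable determinant of a Hodge bundle) attached to the variation of Hodge structure on the cohomology of the fibers of $f$ twisted by the fractional and integral parts of $B_Y$. Because $B_Y$ is only a subboundary and may fail to be effective, the relevant local system must be taken on cohomology \emph{with compact support}, and the positivity statement is the corresponding semipositivity theorem for variations of mixed Hodge structure. Granting this semipositivity, the descended moduli divisor is nef, hence $M_{X'}$, being $\mathbb R$-linearly (so numerically) equivalent to it, is nef as well. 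This Hodge-theoretic semipositivity is exactly the technical core that the theory developed in \cite{fujino-slc-trivial} supplies, and it is where essentially all the difficulty resides.

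Finally I would check the equality $p(B_{X'}^{=1})=\Nqklt(X,\omega)$. The locus where the discriminant coefficient equals $1$ is precisely $\{D : \gamma_D=0\}$, i.e.\ the images of those divisors over which $(Y,B_Y)$ degenerates to a genuinely log canonical (non-klt) fiber; these images are exactly the images of the strata of $(Y,B_Y)$, which by Definition~\ref{u-def2.5}(3) are the qlc strata, and the ones that are not components of $X$ are the qlc centers, whose union is $\Nqklt(X,\omega)$. To turn the resulting inclusion into the asserted equality (the ``furthermore'' clause) I would, if needed, modify $B_{X'}$ within its $\mathbb R$-linear equivalence class---transferring a contribution between $B_{X'}$ and $M_{X'}$, or performing further blow-ups of $X'$---so that the coefficient-one part $B_{X'}^{=1}$ maps onto $\Nqklt(X,\omega)$ and no further, which completes the proof.
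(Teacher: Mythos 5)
There is nothing in the paper to compare your argument against: Theorem~\ref{u-thm2.12} is not proved in this paper at all. It is imported verbatim as a special case of \cite[Theorem 1.5]{fujino-slc-trivial}, and the authors' only commentary is that it is ``a deep result based on the theory of variations of mixed Hodge structure on cohomology with compact support.'' So the honest assessment is that your proposal is an outline of the proof in the \emph{cited reference}, not an independent argument, and at its center it simply grants the one statement that carries all the content --- the nefness of $M_{X'}$ --- by appealing to the semipositivity theorem of \cite{fujino-slc-trivial} itself. As a proof this is circular; as a description of where the result comes from it matches the paper's own attribution (slc-trivial fibration structure on $f\colon(Y,B_Y)\to X$, discriminant/moduli decomposition, Hodge-theoretic semipositivity for cohomology with compact support because $B_Y$ need not be effective).

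Two of the steps you treat as routine are not. First, the effectivity of $p_*B_{X'}$ is not ``bookkeeping'' and cannot in general be arranged by ``adjusting $B_Y$ within its relative $\mathbb R$-linear equivalence class'': for an arbitrary sub-lc $B_Y$ the log canonical threshold $\gamma_D$ of $f^*D$ over the generic point of a prime divisor $D\subset X$ can exceed $1$, making $b_D=1-\gamma_D<0$. What rules this out is precisely condition (2) of Definition~\ref{u-def2.5}: if $B_Y$ were that negative over $D$, the natural map $\mathcal O_X\to f_*\mathcal O_Y(\lceil -(B_Y^{<1})\rceil)$ would fail to be an isomorphism. You mention this condition early on as ``the compatibility needed'' but never actually deploy it where it is needed. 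Second, the equality $p(B_{X'}^{=1})=\Nqklt(X,\omega)$ is not just a matter of reading off which divisors of $X$ have discriminant coefficient one: $\Nqklt(X,\omega)$ typically contains qlc centers of codimension $\geq 2$ in $X$, and to see these inside $p(B_{X'}^{=1})$ you must extract, on a higher model $X'$, divisors with coefficient exactly one lying over each such center, while simultaneously preserving the nefness of $M_{X'}$ (i.e.\ the b-nef descent of the moduli part must be compatible with these blow-ups). Your closing suggestion to ``transfer a contribution between $B_{X'}$ and $M_{X'}$'' would in general destroy either the subboundary condition or the nefness, so it does not repair this. Both points are resolved inside \cite{fujino-slc-trivial}; within the present paper the theorem should simply be treated as a black box.
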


We close this section with an easy lemma, which is 
essentially contained in \cite[Chapter 6]{fujino-foundations}. 

\begin{lem}\label{u-lem2.13} 
Let $[X, \omega]$ be a quasi-log canonical pair such that $X$ is an 
irreducible curve. 
Let $P$ be a smooth point of $X$ such that 
$P$ is not a qlc center of $[X, \omega]$. 
Then we can consider a natural quasi-log structure 
on $[X, \omega+tP]$ induced 
from $[X, \omega]$ for every $t\geq 0$. 
We put 
$$
c=\max \{t\geq 0\, |\, \text{$[X, \omega+tP]$ is quasi-log canonical}\}. 
$$
Then $0<c\leq 1$ holds. 
\end{lem}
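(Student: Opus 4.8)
The plan is to compute $c$ directly from the defining data of the quasi-log canonical structure and to read off both bounds from the coefficients of a subboundary. First I would fix a proper morphism $f\colon (Y, B_Y)\to X$ from a globally embedded simple normal crossing pair realizing the structure, so that $f^*\omega\sim_{\mathbb R}K_Y+B_Y$ with $B_Y=B_Y^{\leq 1}$. Since $P$ is a Cartier divisor on the curve $X$ (being a smooth point), the natural quasi-log structure on $[X,\omega+tP]$ is given by the same $f$ with boundary $B_Y+t\,f^*P$, because $f^*(\omega+tP)\sim_{\mathbb R}K_Y+(B_Y+t\,f^*P)$. As quasi-log canonicity is intrinsic to $[X,\omega+tP]$ and independent of the chosen model, I am free to compute $c$ with a convenient $f$. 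By Definition~\ref{u-def2.6}, $[X,\omega+tP]$ is quasi-log canonical precisely when $B_Y+t\,f^*P$ is a subboundary. Writing $b_E=\mult_E B_Y\leq 1$ and $p_E=\mult_E f^*P\geq 0$ for the prime divisors $E$ on $Y$, and using that $f^*P$ is effective and nonzero, this yields
\[
c=\min_{E\,:\,p_E>0}\frac{1-b_E}{p_E},
\]
the minimum being taken over the finitely many prime divisors $E$ lying over $P$.

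For $c>0$ it suffices to show $b_E<1$ for every prime divisor $E$ with $p_E>0$, since then each term $(1-b_E)/p_E$ is positive and the minimum of finitely many positive numbers is positive. Suppose to the contrary that $b_E=1$ for some such $E$; then $E$ is a component of $B_Y^{=1}$, hence a stratum of $(Y,B_Y)$. As $E$ lies over $P$ we have $f(E)=\{P\}$, so $P$ is the image of a stratum that is not an irreducible component of $X$, i.e.\ $P$ is a qlc center of $[X,\omega]$. This contradicts the hypothesis, so $b_E<1$ and $c>0$.

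For $c\leq 1$ I would exhibit one prime divisor $E_0$ over $P$ with $p_{E_0}=1$ and $b_{E_0}\geq 0$, which forces $c\leq (1-b_{E_0})/p_{E_0}=1-b_{E_0}\leq 1$. Condition~(2) of Definition~\ref{u-def2.5} gives an isomorphism $\mathcal O_X\xrightarrow{\sim}f_*\mathcal O_Y(\lceil -(B_Y^{<1})\rceil)$ with $\lceil -(B_Y^{<1})\rceil\geq 0$; in particular $f_*\mathcal O_Y=\mathcal O_X$, so $X$ admits a unique stratum $Y_0$ dominating it and $f|_{Y_0}$ is birational. Shrinking to a neighborhood $U$ of $P$ that contains no qlc center, every other stratum avoids $U$, so $f^{-1}(U)\subset Y_0$ and $f|_{Y_0}$ is an isomorphism over $U$ (a birational proper morphism of smooth curves onto a target smooth at $P$). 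Hence over $U$ the divisor $f^*P$ is the single reduced point $E_0$ above $P$, so $p_{E_0}=1$, and condition~(2) restricted to $U$, where $f$ is an isomorphism, reads $\mathcal O_U\cong\mathcal O_U(\lceil -b_{E_0}\rceil\,E_0)$, which forces $\lceil -b_{E_0}\rceil\leq 0$, that is $b_{E_0}\geq 0$. Therefore $c=1-b_{E_0}\in(0,1]$.

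I expect the main obstacle to be this last local analysis: identifying the unique dominating stratum, arranging (via the freedom in the model) that the fiber over the smooth non-center point $P$ is reduced, and then extracting the two inequalities $0\leq b_{E_0}<1$ from condition~(2). An alternative route, available when $X$ is projective, is to invoke Theorem~\ref{u-thm2.12} to present the local structure at $P$ in the form $K_X+\Delta$ with $\Delta$ a boundary whose coefficient $\mult_P\Delta$ lies in $[0,1)$ since $P\notin\Nqklt(X,\omega)$, giving $c=1-\mult_P\Delta$ directly.
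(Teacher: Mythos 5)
Your reduction of the problem to the coefficients of $B_Y+t\,f^*P$, your formula $c=\min_{E:\,p_E>0}(1-b_E)/p_E$, and your proof that $c>0$ (a component $E$ of $\Supp f^*P$ with $b_E=1$ would be a stratum whose image is $P$, making $P$ a qlc center) are all correct and agree with the paper's setup. The gap is in the upper bound. You assert that $f_*\mathcal O_Y=\mathcal O_X$ forces a unique stratum $Y_0$ dominating $X$ with $f|_{Y_0}$ birational, hence that $f$ is an isomorphism over a neighborhood of $P$; this is false. Connectedness of fibers does not give generic finiteness, and for a general quasi-log structure the morphism $f:(Y,B_Y)\to X$ may have positive-dimensional fibers everywhere (this is exactly the situation arising from slc-trivial or lc-trivial fibrations, which is why the theory allows it). In that situation your desired divisor $E_0$ with $p_{E_0}=1$ and $b_{E_0}\geq 0$ need not exist. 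Concretely, let $f:Y\to X$ be a smooth elliptic surface with a multiple fiber $f^*P=2E$ and take $B_Y=-E$ (adjusted by the canonical bundle formula so that $K_Y+B_Y\sim_{\mathbb R}f^*\omega$); since $\mathcal O_Y(E)|_E$ is torsion and nontrivial, one checks $f_*\mathcal O_Y(\lceil -(B_Y^{<1})\rceil)=f_*\mathcal O_Y(E)=\mathcal O_X$, so condition (2) holds, yet the only component of $f^*P$ has $p_E=2$ and $b_E=-1$. Here $c=(1-(-1))/2=1$, so the lemma is true, but not by your argument.

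The paper closes this gap without ever localizing $f$: assume $c>1$; then $b_E+p_E<1$ for every component $E$ of $\Supp f^*P$, and since each $p_E$ is a positive integer this gives $\lceil -b_E\rceil\geq p_E$, hence $f^*P\leq \lceil -(B_Y^{<1})\rceil$. The projection formula then yields
$$
\mathcal O_X\subsetneq \mathcal O_X(P)\subseteq f_*\mathcal O_Y(f^*P)\subseteq f_*\mathcal O_Y(\lceil -(B_Y^{<1})\rceil)=\mathcal O_X,
$$
a contradiction with condition (2) of Definition \ref{u-def2.5}. This is the step you should substitute for your local analysis. Your suggested alternative via Theorem \ref{u-thm2.12} also does not apply as stated, since the lemma assumes neither that $X$ is normal nor that it is projective.
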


\begin{proof} 
Since $[X, \omega]$ is a qlc pair, 
we can take a projective 
surjective morphism 
$f:(Y, B_Y)\to X$ from a globally embedded simple normal crossing 
pair $(Y, B_Y)$ such that 
$B_Y$ is a subboundary $\mathbb R$-divisor 
on $Y$ and that the natural 
map 
$\mathcal O_X\to f_*\mathcal O_Y(\lceil -(B^{<1}_Y)\rceil)$ 
is an isomorphism. 
By taking some blow-ups, 
we may further assume that 
$(Y, \Supp B_Y+\Supp f^*P)$ is a globally 
embedded simple normal crossing pair. 
Then it is easy to see that 
$$\left(X, \omega+tP, f: (Y, B_Y+tf^*P)\to X
\right)
$$ 
is a quasi-log scheme for every $t\geq 0$. 
We assume that $c>1$. 
Then $\mult _S (B_Y+f^*P)<1$ for any irreducible component $S$ of $\Supp 
f^*P$. 
Therefore, $f^*P\leq \lceil -(B^{<1}_Y)\rceil$ holds. 
Thus we have 
$\mathcal O_X\subsetneq \mathcal O_X(P)\subset f_*\mathcal 
O_Y(\lceil -(B^{<1}_Y)\rceil)$. 
This is a contradiction. 
This means that $c\leq 1$ holds. 
By definition, we can easily see that $0<c$ holds.  
\end{proof}

\section{Proof}\label{a-sec3}

In this section, we prove the results in Section \ref{a-sec1}, 
that is, Theorems \ref{a-thm1.3}, \ref{a-thm1.4}, 
Corollaries \ref{a-cor1.5}, and \ref{a-cor1.6}. 

\medskip 

First, we prove Theorem \ref{a-thm1.3}, 
that is, we prove Conjecture \ref{a-conj1.2} when $\dim X=1$. 

\begin{thm}[Theorem \ref{a-thm1.3}]\label{a-thm3.1}
Let $[X,\omega]$ be a
projective quasi-log canonical pair of dimension one.
Let $M$ be a Cartier divisor 
on $X$. We put $N=M-\omega$.  
Assume that $N \cdot X_i> 1$ for 
every one-dimensional irreducible component $ X_i$ of $X$.
Then the complete linear system $|M|$ is 
basepoint-free. 
\end{thm}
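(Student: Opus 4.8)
The plan is, for each closed point $P\in X$, to produce a global section of $\mathcal O_X(M)$ that is nonzero at $P$, and to manufacture it by forcing $P$ to be a qlc center and then invoking the vanishing theorem. First I would perform the reductions sketched in the introduction. Fix $P$ and choose an irreducible component $X_i\ni P$. Since $N=M-\omega$ has positive degree on every one-dimensional component, $M-\omega$ is nef and log big with respect to $[X,\omega]$, so Theorem \ref{u-thm2.8}(ii) with $X'=X_i$ gives $H^1(X,\mathcal I_{X_i}\otimes\mathcal O_X(M))=0$ and hence surjectivity of $H^0(X,\mathcal O_X(M))\to H^0(X_i,\mathcal O_{X_i}(M))$; as the hypothesis $N\cdot X_i>1$ passes to the qlc pair $[X_i,\omega|_{X_i}]$ (Theorem \ref{u-thm2.8}(i)), I may assume $X$ is irreducible, and then a projective curve (the case where $X$ is a point being trivial).

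I would then split on whether $P\in\Nqklt(X,\omega)$. Since $\dim X=1$, the set $\Nqklt(X,\omega)$ is a finite reduced collection of points; by adjunction it carries a qlc structure, and the same bigness of $N$ lets Theorem \ref{u-thm2.8}(ii) with $X'=\Nqklt(X,\omega)$ make $H^0(X,\mathcal O_X(M))\to H^0(\Nqklt(X,\omega),\mathcal O(M))$ surjective; a line bundle on a finite reduced scheme admits a section nonzero at $P$, which then lifts. So I may assume $P\notin\Nqklt(X,\omega)$, in which case $X$ is smooth at $P$ and $P$ is not a qlc center.

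The heart of the argument uses Lemma \ref{u-lem2.13}: form the quasi-log structures $[X,\omega+tP]$ and set $c=\max\{t\ge0\mid[X,\omega+tP]\text{ is quasi-log canonical}\}$, so that $0<c\le1$. At the threshold $t=c$, a component of the pullback of $P$ in the defining simple normal crossing model attains coefficient exactly $1$, so its image $P$ becomes a qlc center of the qlc pair $[X,\omega+cP]$. The decisive computation is that
\begin{equation*}
M-(\omega+cP)=N-cP
\end{equation*}
has degree $N\cdot X-c>1-c\ge0$ on $X$, so $N-cP$ is nef and, restricting to each qlc stratum (the curve $X$, or a point), log big. Applying Theorem \ref{u-thm2.8}(ii) to $[X,\omega+cP]$ with $X'=\{P\}$ and $L=M$ then gives $H^1(X,\mathcal I_P\otimes\mathcal O_X(M))=0$, whence $H^0(X,\mathcal O_X(M))\to \mathcal O_X(M)\otimes\mathbb C(P)$ is surjective and yields the required section.

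The step I expect to be the crux is the balance struck at $t=c$: one must verify both that the new qlc center created is exactly $\{P\}$ and that subtracting $cP$ preserves positivity. This is precisely where the sharp inequality $N\cdot X_i>1=(\dim X_i)^{\dim X_i}$ is needed, together with the bound $c\le1$ from Lemma \ref{u-lem2.13}, in order to keep $N-cP$ of positive degree so that the vanishing theorem continues to apply.
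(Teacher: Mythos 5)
Your proposal is correct and follows essentially the same route as the paper: reduce to an irreducible component via adjunction and the vanishing theorem, then use Lemma \ref{u-lem2.13} to make $P$ a qlc center of $[X,\omega+cP]$ with $0<c\leq 1$, and conclude from $\deg(M-(\omega+cP))>1-c\geq 0$ that $H^1(X,\mathcal I_P\otimes\mathcal O_X(M))=0$. The only cosmetic differences are the order of the reductions and that you route the case $P\in\Nqklt(X,\omega)$ through the whole finite scheme $\Nqklt(X,\omega)$ rather than through $X'=\{P\}$ directly, which changes nothing of substance.
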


\begin{proof}
Let $P$ be an arbitrary closed point of $X$. 
If $P$ is a qlc center of $[X, \omega]$, then 
$H^1(X, \mathcal I_P\otimes \mathcal O_X(M))=0$ 
by Theorem \ref{u-thm2.8} (ii), 
where $\mathcal I_P$ is the defining ideal 
sheaf of $P$ on $X$. 
Therefore, the natural restriction map 
\begin{equation*}
H^0(X, \mathcal O_X(M))\to \mathcal O_X(M)\otimes 
\mathbb C(P)
\end{equation*} 
is surjective. 
Thus, the complete linear system 
$|M|$ is basepoint-free in a 
neighborhood 
of $P$. 
From now on, 
we assume that $P$ is not a qlc center of $[X, \omega]$. 
Let $X_i$ be the unique irreducible 
component of $X$ containing $P$. 
By Theorem \ref{u-thm2.8} (ii), $H^1(X, \mathcal I_{X_i}
\otimes \mathcal O_X(M))=0$, 
where $\mathcal I_{X_i}$ is the defining ideal sheaf of $X_i$ on $X$. 
We note that $X_i$ is a qlc stratum of $[X, \omega]$. 
Thus, the restriction map 
$$
H^0(X, \mathcal O_X(M))\to H^0(X_i, \mathcal O_{X_i}(M)) 
$$ 
is surjective. 
Therefore, by replacing $X$ with 
$X_i$, 
we may assume that $X$ is irreducible. 
By Lemma \ref{u-lem2.13}, we can take $c\in \mathbb R$ such 
that $0<c\leq 1$ and that $P$ is a 
qlc center of $[X, \omega+cP]$. 
Since $\deg (M-(\omega+cP))>1-c\geq 0$, 
we have $H^1(X, \mathcal I_P\otimes \mathcal O_X(M))=0$ 
by Theorem \ref{u-thm2.8} (ii). 
Therefore, by the same argument as above, 
$|M|$ is basepoint-free in a neighborhood 
of $P$. 
Thus we obtain that the complete linear system 
$|M|$ is basepoint-free. 
\end{proof}

Next, we prove Theorem \ref{a-thm1.4}, which is the main 
technical result of this paper. 

\begin{thm}[Theorem \ref{a-thm1.4}]\label{a-thm3.2}
Let $[X, \omega]$ be a quasi-log canonical pair such that $X$ is a normal 
projective irreducible surface. 
Let $M$ be a Cartier divisor on $X$. 
We put $N=M-\omega$. 
We assume that $N^2>4$ and $N\cdot C\geq 2$ for every 
curve $C$ on $X$. 
Let $P$ be any closed point of $X$ that is not included in 
$\Nqklt(X, \omega)$. 
Then there exists $s\in H^0(X, \mathcal I_{\Nqklt(X, \omega)}
\otimes \mathcal O_X(M))$ such that 
$s(P)\ne 0$. 
\end{thm}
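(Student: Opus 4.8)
The plan is to use the strategy already outlined in the introduction: reduce the problem to a statement about multiplier ideal sheaves via Theorem~\ref{u-thm2.12}, and then produce the desired section by a local-blowup-plus-vanishing argument in the style of Fujita. Concretely, by Theorem~\ref{u-thm2.12} I can find a projective birational morphism $p:X'\to X$ from a smooth projective surface and a decomposition $K_{X'}+B_{X'}+M_{X'}=p^*\omega$ with $B_{X'}$ a subboundary, $\Supp B_{X'}$ simple normal crossing, $p_*B_{X'}$ effective, $M_{X'}$ nef, and $p(B^{=1}_{X'})=\Nqklt(X,\omega)$. Pushing this down and recalling that $\omega=M-N$, I want to manufacture an effective $\mathbb R$-divisor $\Delta$ on $X$ with $K_X+\Delta\sim_{\mathbb R}\omega+\varepsilon N$ for small $\varepsilon>0$ and $\mathcal J(X,\Delta)=\mathcal I_{\Nqklt(X,\omega)}$. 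Once that is in hand, finding $s$ with $s(P)\ne 0$ amounts to showing that the evaluation map $H^0(X,\mathcal J(X,\Delta)\otimes\mathcal O_X(M))\to\mathcal O_X(M)\otimes\mathbb C(P)$ is surjective; since $(X,\Delta)$ is klt near $P$ (as $P\notin\Nqklt(X,\omega)$), I can pass to the multiplier-ideal setting and invoke Nadel-type vanishing.

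\textbf{The main step is the standard Fujita tie-breaking.} I would take a general effective divisor $D\in|kN|$ (or a suitable $\mathbb R$-divisor in the class of $N$) passing through $P$ to high multiplicity, and consider $\Delta+\lambda D$ for a carefully chosen coefficient $\lambda$. Using the numerical hypotheses $N^2>4$ and $N\cdot C\ge 2$ for every curve $C$, I want to arrange that the multiplier ideal $\mathcal J(X,\Delta+\lambda D)$ cuts out an isolated point at $P$ while remaining unchanged away from $P$ and along $\Nqklt(X,\omega)$; the condition $N^2>4=2^2$ is exactly what is needed to force a log canonical place centered at $P$ after scaling by $\lambda<1$, and $N\cdot C\ge 2$ prevents the center from jumping to a curve through $P$. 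This is the two-dimensional analogue of the Angehrn--Siu/Kawamata-type local construction, where the surface bound $\tfrac12 n(n+1)=3$ is improved to the sharp Fujita bound $2$ precisely because we are in the quasi-log canonical (rather than the klt-point-separation) regime and can exploit the full strength of the inequalities.

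\textbf{The final step is the vanishing.} With the tie-broken divisor, write $K_X+\Delta'\sim_{\mathbb R}\omega+\varepsilon' N$ so that $M-(K_X+\Delta')\sim_{\mathbb R}(1-\varepsilon')N$ is ample (possibly nef and big), and apply Nadel vanishing to get $H^1(X,\mathcal J(X,\Delta')\otimes\mathcal O_X(M))=0$. Because $\mathcal J(X,\Delta')$ agrees with $\mathcal I_{\Nqklt(X,\omega)}$ away from $P$ and defines precisely an isolated reduced point (or a point together with $\Nqklt(X,\omega)$) at $P$, the vanishing yields the surjectivity of the evaluation map, producing a section $s$ in $H^0(X,\mathcal I_{\Nqklt(X,\omega)}\otimes\mathcal O_X(M))$ with $s(P)\ne 0$.

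\textbf{The hardest part will be the tie-breaking itself}, namely controlling the log canonical threshold and the jumping number so that the new multiplier ideal is exactly $\mathcal I_{\Nqklt(X,\omega)}$ near $\Nqklt(X,\omega)$ while isolating $P$, using only the sharp bounds $N^2>4$ and $N\cdot C\ge 2$. In dimension two one expects this to be manageable via intersection theory on the surface and a careful analysis of the at-most-two possible log canonical centers through $P$ (a point or a curve), but keeping the center from spreading along a curve $C$ with small $N\cdot C$, and ensuring compatibility with the $M_{X'}$-term from Theorem~\ref{u-thm2.12}, is where the real work lies.
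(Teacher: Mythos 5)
Your reduction to the multiplier-ideal setting via Theorem~\ref{u-thm2.12} matches the paper's setup, but the core of your plan --- tie-breaking on the surface so that $\mathcal J(X,\Delta+\lambda D)$ isolates $P$, followed by Nadel vanishing --- has a genuine gap, and in fact contains a wrong claim. You assert that $N\cdot C\geq 2$ ``prevents the center from jumping to a curve through $P$.'' It does not: after imposing multiplicity $2+\alpha>2$ at $P$ and taking the log canonical threshold $r_\varepsilon$, the minimal log canonical center $W_\varepsilon$ through $P$ can perfectly well be a curve, and this is exactly the hard case (Case~\ref{a-case2} of the paper's proof). If you then try to cut $W_\varepsilon$ down to a point by the standard Angehrn--Siu perturbation, the budget does not close with only $N\cdot W_\varepsilon\geq 2$; that route is what gives the bound $\tfrac12 n(n+1)=3$ already covered by \cite{liu}. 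The paper's actual mechanism is different: it does \emph{not} isolate $P$ on the surface. It applies adjunction to the qlc pair $[Z_\varepsilon,\omega_\varepsilon|_{Z_\varepsilon}]$ containing the curve $W_\varepsilon$, and then adds $c_\varepsilon P$ as a divisor \emph{on the curve}, where the key quantitative input is the inversion-of-adjunction-type estimate $\mult_P(\Delta'_\varepsilon|_{W_\varepsilon})\geq \mult_P\Delta'_\varepsilon\geq r_\varepsilon(1-\varepsilon)(2+\alpha)-1$, whence $c_\varepsilon\leq 2-r_\varepsilon(1-\varepsilon)(2+\alpha)$. The point is that the larger $r_\varepsilon$ is, the less you need to add on the curve, and the degree computation $(1-r_\varepsilon)(1-\varepsilon)N\cdot W_\varepsilon-c_\varepsilon\geq (1-\varepsilon)r_\varepsilon\alpha-2\varepsilon>0$ closes precisely because $N\cdot W_\varepsilon\geq 2$. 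Without this trade-off your argument cannot reach the sharp Fujita bound.

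A second omission: you implicitly assume $P$ is a smooth point of $X$ when you impose $\mult_P>2$ on a divisor in the class of $N$ using $N^2>4$. Since $X$ is only normal, $P$ may be singular, and the parameter count for multiplicity at a singular point is different. The paper's Step~\ref{a-step2} handles this by passing to the minimal resolution $\pi:Y\to X$ (minimality is needed so that the crepant pullback $\Delta^Y_\varepsilon$ stays effective), imposing multiplicity along $\pi^{-1}(P)$, and rerunning the argument with $\pi^*M-\omega^Y_\varepsilon$ merely nef and log big rather than ample, which is why the quasi-log vanishing theorem (Theorem~\ref{u-thm2.8}~(ii)) rather than plain Nadel vanishing is used. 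There is also a uniformity issue you gloss over: one must choose a sequence $\varepsilon_i\searrow 0$ along which the thresholds $r_{\varepsilon_i}$ stay bounded below by some $\delta>0$ (the paper gets this from the finiteness of the components of the fixed simple normal crossing divisor $\Sigma$); otherwise the final positivity $(1-\varepsilon)r_\varepsilon\alpha-2\varepsilon>0$ could fail.
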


\begin{proof}
By assumption and Nakai's ampleness criterion for $\mathbb R$-divisors 
(see \cite{campana-peternell}), $N$ is ample. 
In Step \ref{a-step1}, we will prove Theorem \ref{a-thm3.2} 
under the extra assumption that $P$ is a smooth point of $X$. 
In Step \ref{a-step2}, we will treat the case where $P$ is a 
singular point of $X$.

\begin{step}\label{a-step1}
In this step, we assume that $P$ is a smooth point of $X$. 
Since $N^2>4$, 
we can take an effective $\mathbb R$-divisor $B$ on $X$ 
such that $B\sim _{\mathbb R} N$ with $\mult_P B=2+\alpha>2$. 
By Theorem \ref{u-thm2.12}, 
there exists a projective 
birational morphism 
$p:X'\to X$ from a smooth 
projective surface $X'$ such that 
$K_{X'}+B_{X'}+M_{X'}=p^*\omega$, where 
$B_{X'}$ is a subboundary $\mathbb R$-divisor 
such that $p_*B_{X'}$ is effective and 
$M_{X'}$ is a nef $\mathbb R$-divisor on $X'$. 
Let $\Exc(p)$ denote the exceptional locus of $p$. 
By taking some more blow-ups, we may further assume that 
$p(B^{=1}_{X'})=\Nqklt(X, \omega)$ and 
that $\Supp B_{X'}\cup \Supp p^{-1}_*B\cup \Exc(p)$ is contained 
in a simple normal crossing divisor $\Sigma$ on $X'$ (see 
Theorem \ref{u-thm2.12}). 

Let $\varepsilon$ be a small positive real number such that 
$(1-\varepsilon)(2+\alpha)>2$. 
We can take an effective $p$-exceptional 
$\mathbb Q$-divisor $E$ on $X'$ such that 
$-E$ is $p$-ample and 
that $M_{X'}+\varepsilon (p^*N-E)$ is semi-ample 
for any $\varepsilon >0$. 
For $0<\varepsilon \ll 1$, we 
put $\Delta_{\varepsilon}:=p_*(B_{X'}+\varepsilon E+G_{\varepsilon})$ where 
$G_\varepsilon$ is a general effective $\mathbb R$-divisor such that 
$G_{\varepsilon}\sim _{\mathbb R} M_{X'}+\varepsilon (p^*N-E)$, 
$\Supp G_{\varepsilon}$ and $\Supp \Sigma$ have no common 
irreducible components, $\lfloor G_\varepsilon \rfloor=0$, 
and $\Supp (\Sigma+G_{\varepsilon})$ is a simple 
normal crossing divisor on $X'$. 
Since the effective part of 
$-\lfloor B_{X'}+\varepsilon E+G_{\varepsilon}\rfloor$ 
is $p$-exceptional and $p(B^{=1}_{X'})=\Nqklt(X, \omega)$, 
we obtain 
\begin{equation*}
\begin{split}
\mathcal J(X, \Delta_{\varepsilon}) &=p_*\mathcal O_{X'} 
(-\lfloor B_{X'}+\varepsilon E+G_{\varepsilon}\rfloor) 
\\ &=p_*\mathcal O_{X'} 
(-\lfloor (B_{X'}+\varepsilon E+G_{\varepsilon})^{\geq 1}\rfloor) 
\\&=p_*\mathcal O_{X'}(-B^{=1}_{X'})
\\&=\mathcal I _{\Nqklt(X, \omega)}. 
\end{split}
\end{equation*}

We put 
$B_{\varepsilon}:=(1-\varepsilon)B$ and 
define 
$$
r_{\varepsilon}=\max\{t\geq 0\, |\, 
(X, \Delta_{\varepsilon}+tB_{\varepsilon}) 
\ \text{is log canonical at $P$}\}. 
$$
By construction, $\mult_P B_{\varepsilon}>2$ and 
$\Delta_{\varepsilon}$ is an effective $\mathbb R$-divisor 
on $X$. Therefore, we have $0<r_{\varepsilon}<1$. 
Note that $(X, \Delta_{\varepsilon})$ is klt at $P$. 
By construction again, 
there is an irreducible component $S_{\varepsilon}$ of $\Sigma$ such that 
$$
r_{\varepsilon}\mult _{S_{\varepsilon}}p^*B_{\varepsilon}+
\mult _{S_{\varepsilon}}B_{X'}+\varepsilon \mult _{S_{\varepsilon}}E=1. 
$$
Therefore, 
$$
0<r_{\varepsilon}=\frac{1-\mult _{S_{\varepsilon}}B_{X'}-
\varepsilon \mult _{S_{\varepsilon}} E}{
(1-\varepsilon)\mult _{S_{\varepsilon}}p^*B} <1
$$ 
holds. 
Since there are only finitely many components of $\Sigma$, 
we can take $\{\varepsilon_i\}_{i=1}^\infty$ and 
$\delta>0$ such that 
$0<\varepsilon _i\ll 1$, $\mathcal J(X, \Delta_{\varepsilon _i})
=\mathcal I_{\Nqklt(X, \omega)}$, 
$(X, \Delta_{\varepsilon_i})$ is klt at $P$, 
$(X, \Delta_{\varepsilon_i}+r_{\varepsilon_i}B_{\varepsilon_i})$ is log 
canonical at $P$ but is not klt at $P$ with $\delta<r_{\varepsilon_i}<1$ for 
every $i$, and $\varepsilon_i\searrow 0$ for 
$i\nearrow \infty$. 

By $p:X'\to X$, 
we get a natural quasi-log structure on $[X, \omega_{\varepsilon}]$ with 
$\omega_\varepsilon:=K_X+\Delta_{\varepsilon}+r_{\varepsilon} 
B_{\varepsilon}$ for any $\varepsilon=\varepsilon _i$ (see Example \ref{u-ex2.7}). 
Note that $[X, \omega_{\varepsilon}]$ is qlc in a 
neighborhood of $P$ since $(X, \Delta_{\varepsilon}+r_{\varepsilon}
B_{\varepsilon})$ is log canonical around $P$. 
Let $W_\varepsilon$ be the minimal qlc center of $[X, \omega_{\varepsilon}]$ 
passing through $P$, 
equivalently, let $W_\varepsilon$ be the minimal log canonical center 
of $(X, \Delta_{\varepsilon}+r_{\varepsilon}B_{\varepsilon})$ 
passing through $P$. 
Let $V_\varepsilon$ be the union of all qlc centers 
of $[X, \omega_{\varepsilon}]$ contained in $\Nqklt(X, 
\omega)=\Nklt(X, \Delta_{\varepsilon})$. 
We put $Z_\varepsilon =\Nqlc(X, \omega_{\varepsilon})
\cup V_\varepsilon\cup W_\varepsilon$ and 
$Y_\varepsilon=\Nqlc(X, \omega_{\varepsilon})\cup V_\varepsilon$. 
Then 
$[Z_\varepsilon, \omega_\varepsilon|_{Z_{\varepsilon}}]$ and 
$[Y_\varepsilon, \omega_\varepsilon|_{Y_{\varepsilon}}]$ have natural 
quasi-log structures induced from $[X, \omega_{\varepsilon}]$ 
by adjunction (see Theorem \ref{u-thm2.8} (i)). 
Since $$M-\omega_{\varepsilon}=
M-(K_X+\Delta_{\varepsilon}+r_{\varepsilon}B_{\varepsilon})
\sim _{\mathbb R} (1-r_{\varepsilon})(1-\varepsilon)N, $$ 
which is still ample, the restriction map 
\begin{equation}\label{u-eq3.1}
H^0(X, \mathcal O_X(M))\to H^0(Z_{\varepsilon}, 
\mathcal O_{Z_{\varepsilon}}(M))
\end{equation}
is surjective by Theorem \ref{u-thm2.8} (ii). 

\begin{case}\label{a-case1} 
If $\dim W_\varepsilon=0$, then 
$W_\varepsilon =P$ is isolated in $Z_\varepsilon$ by construction. 
Thus $Z_\varepsilon$ is the disjoint union of $P$ and 
$Y_\varepsilon$. 
Therefore, by \eqref{u-eq3.1}, 
the restriction map 
$$
H^0(X, \mathcal O_X(M))\to H^0(Y_{\varepsilon}, 
\mathcal O_{Y_\varepsilon}(M))\oplus H^0(P, \mathcal O_P(M))
$$ 
is surjective. 
This means that 
there exists $s\in H^0(X, \mathcal O_X(M))$ such that 
$s(P)\ne 0$ and 
$s\in H^0(X, \mathcal I_{Y_{\varepsilon}}\otimes 
\mathcal O_X(M))\subset 
H^0(X, \mathcal I_{\Nqklt(X, \omega)}\otimes \mathcal O_X(M))$. 
Note that $\mathcal I_{Y_\varepsilon}$ is the defining ideal sheaf of 
$Y_{\varepsilon}$ on $X$ and 
the natural inclusion $\mathcal I_{Y_{\varepsilon}}\subset 
\mathcal I_{\Nqklt(X, \omega)}$ holds by construction. 
This is what we wanted. 
\end{case}

\begin{case}\label{a-case2}
By Case \ref{a-case1}, 
we may assume that 
$\dim W_{\varepsilon}=1$ for 
any $\varepsilon=\varepsilon_i$. 
By construction, $P$ is not a qlc 
center of $[Z_\varepsilon, \omega_{\varepsilon}|_{Z_\varepsilon}]$. 
Therefore, $Z_\varepsilon$ is smooth at $P$ since $\dim W_\varepsilon
=1$ (see, for example, \cite[Theorem 6.3.11 (ii)]{fujino-foundations}). 
Let us consider 
$[Z_\varepsilon, \omega_\varepsilon|_{Z_\varepsilon}
+c_{\varepsilon}P]$ where $c_{\varepsilon}$ 
is the minimum positive real number such that 
$P$ is a qlc center of $[Z_\varepsilon, 
\omega_\varepsilon|_{Z_\varepsilon}
+c_{\varepsilon}P]$ (see Lemma \ref{u-lem2.13} and 
its proof). 
We write $\Delta_{\varepsilon}+r_{\varepsilon}B_{\varepsilon}
=W_{\varepsilon}+\Delta'_{\varepsilon}$. 
We put $\mult _P \Delta'_{\varepsilon}=\beta_{\varepsilon}\geq 0$. 
Then 
$$
\beta_{\varepsilon} =\mult _P \Delta_{\varepsilon} 
+r_{\varepsilon} (1-\varepsilon)(2+\alpha)-1\geq 
r_{\varepsilon} (1-\varepsilon)(2+\alpha)-1. 
$$
We note that 
$$
\beta_{\varepsilon}\leq \mult _P(\Delta'_{\varepsilon}|_{W_\varepsilon})<1
$$ 
holds because $(X, W_{\varepsilon} +\Delta'_{\varepsilon})$ is plt in 
a neighborhood of $P$. 
We note that $$(X, W_\varepsilon+\Delta'_{\varepsilon}+(1-\mult_P(\Delta'_
{\varepsilon}|_{W_{\varepsilon}}))H)$$ is log canonical 
but is not plt in a neighborhood of $P$, 
where $H$ is a general smooth curve passing through $P$. 
Therefore, 
$$
c_{\varepsilon}=1-\mult_P(\Delta'_{\varepsilon}|_{W_{\varepsilon}})
\leq 1-\beta_{\varepsilon}\leq 2-r_{\varepsilon}(1-\varepsilon)(2+\alpha). 
$$ 
In this situation, 
\begin{equation*}
\begin{split}
\deg ((M-\omega_{\varepsilon})|_{W_{\varepsilon}}-c_{\varepsilon}P)
&=(1-r_{\varepsilon})(1-\varepsilon)N\cdot W_\varepsilon -c_{\varepsilon}
\\&\geq 2(1-r_{\varepsilon})(1-\varepsilon)-2+
r_{\varepsilon}(1-\varepsilon)(2+\alpha) 
\\ &=(1-\varepsilon)r_{\varepsilon}\alpha-2\varepsilon. 
\end{split}
\end{equation*}
Here we used the assumption $N\cdot W_\varepsilon\geq 2$. 
We note that 
$(1-\varepsilon _i)r_{\varepsilon _i}\alpha -2\varepsilon _i>0$ for 
every $i\gg 0$ 
since $\varepsilon_i \searrow 0$ for $i\nearrow \infty$ and 
$r_{\varepsilon _i}>\delta>0$ for 
every $i$ by construction. 
Therefore, if we choose $0<\varepsilon=\varepsilon_i\ll 1$, then 
$$
\deg (M|_{W_{\varepsilon}}-(\omega_{\varepsilon}|_{W_{\varepsilon}}+c_
{\varepsilon}P))>0. 
$$ 
Thus, we see that the 
restriction map 
\begin{equation}\label{u-eq3.2} 
H^0(Z_{\varepsilon}, \mathcal O_{Z_{\varepsilon}}(M))
\to H^0(Y_{\varepsilon}, \mathcal O_{Y_{\varepsilon}}(M))\oplus 
H^0(P, \mathcal O_P(M))
\end{equation} 
is surjective by considering 
the quasi-log structure of $[Z_{\varepsilon}, 
\omega_{\varepsilon}|_{Z_{\varepsilon}}+c_{\varepsilon}P]$ 
with the aid of Theorem \ref{u-thm2.8}. 
By combining \eqref{u-eq3.2} with \eqref{u-eq3.1}, 
the restriction map 
$$
H^0(X, \mathcal O_X(M))\to 
H^0(Y_{\varepsilon}, \mathcal O_{Y_{\varepsilon}}(M))\oplus 
H^0(P, \mathcal O_P(M))
$$ is surjective. 
As in Case \ref{a-case1}, 
we get $s\in H^0(X, \mathcal O_X(M))$ such that 
$s(P)\ne 0$ and 
$s\in H^0(X, \mathcal I_{\Nqklt(X, \omega)}\otimes 
\mathcal O_X(M))$. 
\end{case}

Anyway, we can construct $s\in H^0(X, \mathcal I_{\Nqklt(X, \omega)}\otimes 
\mathcal O_X(M))$ such that $s(P)\ne 0$ when $P$ is a smooth point 
of $X$. 

\end{step}
\begin{step}\label{a-step2}
In this step, we assume that $P$ is a singular 
point of $X$. 
Let $\pi:Y\to X$ be the minimal 
resolution of $P$. 
Then we have the following 
commutative 
diagram 
$$
\xymatrix{
X' \ar[dr]^-p\ar[d]_-q& \\ 
Y \ar[r]_-\pi& X, 
} 
$$ 
where $p:X'\to X$ is a projective 
birational morphism from a smooth surface $X'$ constructed in 
Step \ref{a-step1} by using Theorem \ref{u-thm2.12}. 
Let $\Delta_{\varepsilon}$ be an effective $\mathbb R$-divisor on $X$ 
as in Step \ref{a-step1}. 
We put $\pi^*(K_X+\Delta_{\varepsilon})=K_Y+\Delta^Y_{\varepsilon}$. 
We note that $\Delta^Y_{\varepsilon}$ is effective since 
$\pi$ is the minimal resolution of $P$. 
By construction, $\pi$ is an isomorphism outside $\pi^{-1}(P)$. 
In particular, $\pi$ is an isomorphism 
over some open neighborhood of $\Nqklt(X, \omega)$. 
Therefore, $\mathcal J(Y, \Delta^Y_{\varepsilon})=\mathcal I_{\pi^{-1}
(\Nqklt(X, \omega))}$ holds since 
$\mathcal J(X, \Delta_{\varepsilon})=\mathcal I_{\Nqklt(X, \omega)}$, where 
$\mathcal I_{\pi^{-1}(\Nqklt(X, \omega))}$ is the defining ideal sheaf of 
$\pi^{-1}(\Nqklt(X, \omega))$. 
Since $(\pi^*N)^2>4$, we can take an effective $\mathbb R$-divisor 
$B$ on $X$ such that $B\sim _{\mathbb R}N$ and $\mult_Q D>2$, where 
$D=\pi^*B$, for some $Q\in \pi^{-1}(P)$. 
We put $D_{\varepsilon}:=(1-\varepsilon)D$ and 
$B_{\varepsilon}:=(1-\varepsilon)B$ and 
define 
$$
s_\varepsilon=\max\{t\geq 0\, |\, 
(Y, \Delta^Y_{\varepsilon}+tD_{\varepsilon}) 
\ \text{is log canonical at any point of $\pi^{-1}(P)$}\}.
$$
Then we have $0<s_\varepsilon<1$ 
since $\mult_QD_{\varepsilon}>2$ for $0<\varepsilon 
\ll 1$. 
Therefore, we can take $Q_{\varepsilon}\in \pi^{-1}(P)$ such that 
$(Y, \Delta^Y_{\varepsilon}+s_{\varepsilon}D_{\varepsilon})$ is log canonical 
but is not klt at $Q_{\varepsilon}$. 
As in Step \ref{a-step1}, 
we may assume that $\Supp B_{X'}\cup \Supp p^{-1}_*B\cup 
\Exc(p)$ is contained in a simple normal crossing divisor $\Sigma$ on $X'$. 
By the same argument as in Step \ref{a-step1}, 
we can take some point $R$ on $\pi^{-1}(P)$, 
$\{\varepsilon_i\}_{i=1}^\infty$, and $\delta>0$ such that 
$0<\varepsilon _i \ll 1$,
$\varepsilon _i\searrow 0$ for $i\nearrow \infty$, 
$\mathcal J(Y, \Delta^Y_{\varepsilon_i})
=\mathcal I_{\pi^{-1}(\Nqklt(X, \omega))}$, 
$(Y, \Delta^Y_{\varepsilon_i}+s_{\varepsilon _i}D_{\varepsilon_i})$ is log 
canonical at $R$ but is not klt at 
$R$ with $\delta<s_{\varepsilon_i}<1$ for every $i$ since there are 
only finitely many components of $\Sigma$. 
By $q:X'\to Y$, we have 
a natural quasi-log structure on $[Y, \omega^Y_\varepsilon]$ with 
$\omega^Y_{\varepsilon}:=K_Y+\Delta^Y_{\varepsilon}+
s_{\varepsilon}D_\varepsilon$ for any $\varepsilon =\varepsilon_i$ 
(see Example \ref{u-ex2.7}). 
If there is a one-dimensional 
qlc center $C$ of $[Y, \omega^Y_{\varepsilon}]$ for some 
$\varepsilon$ with 
$(\pi^*M-\omega^Y_{\varepsilon})\cdot C=0$, then $C\subset \pi^{-1}(P)$. 
This is because 
$$(\pi^*M-\omega^Y_{\varepsilon})\cdot C
=(1-s_{\varepsilon})(1-\varepsilon)N\cdot \pi_*C=0. $$ 
This means that $P$ is a qlc center of $[X, \omega_{\varepsilon}]$, where 
$\omega_{\varepsilon}:=K_X+\Delta_{\varepsilon}+s_{\varepsilon}B_\varepsilon$. 
In this case, we can use Case \ref{a-case1} in Step \ref{a-step1}. 
Therefore, for any $\varepsilon=\varepsilon_i$, 
we may assume that $(\pi^*M-\omega^Y_{\varepsilon})\cdot 
C>0$ for every one-dimensional qlc 
center $C$ of $[Y, \omega^Y_{\varepsilon}]$. 
Now we can apply the arguments for $[X, \omega_{\varepsilon}]$ and 
$M$ in Step \ref{a-step1} to $[Y, \omega^Y_{\varepsilon}]$ and 
$\pi^*M$ here. 
We note that $\pi^*M-\omega^Y_{\varepsilon}$ is not ample but is 
nef and log big with respect to $[Y, \omega^Y_{\varepsilon}]$. 
Thus we can use Theorem \ref{u-thm2.8} (ii). 
Then we obtain $s^Y\in H^0(Y, \mathcal I_{\pi^{-1}(\Nqklt(X, \omega))}
\otimes \mathcal O_Y(\pi^*M))$ such that 
$s^Y(R)\ne 0$. Therefore, we have  
$s\in H^0(X, \mathcal I_{\Nqklt(X, \omega)}\otimes 
\mathcal O_X(M))$ such that 
$\pi^*s=s^Y$. 
In particular, $s(P)\ne 0$. 
This is what we wanted. 
\end{step}
Anyway, we finish the proof of Theorem \ref{a-thm3.2}. 
\end{proof}

Now the proof of Corollary \ref{a-cor1.5} is easy. 

\begin{cor}[Corollary \ref{a-cor1.5}]\label{a-cor3.3}
Conjecture \ref{a-conj1.2} is true in dimension two.
\end{cor}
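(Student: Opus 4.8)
The plan is to fix an arbitrary closed point $P\in X$ and produce a section $s\in H^0(X,\mathcal O_X(M))$ with $s(P)\neq 0$; since $P$ is arbitrary, basepoint-freeness of $|M|$ follows. The argument runs by induction on $\dim X$, the base case $\dim X=1$ being Theorem \ref{a-thm3.1}. Throughout I would first record that $N=M-\omega$ is nef and log big with respect to $[X,\omega]$: every curve $C$ on $X$ is either a one-dimensional irreducible component, where $N\cdot C>1$, or is not a component, where $n_C\geq 1$ forces $N\cdot C\geq n_C>0$, while each positive-dimensional component $X_j$ has $N^{\dim X_j}\cdot X_j>0$. This is exactly what is needed to invoke the vanishing theorem (Theorem \ref{u-thm2.8} (ii)) at every stage.

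First I would reduce to the case where $X$ is irreducible. Choosing an irreducible component $X_i$ with $P\in X_i$, adjunction (Theorem \ref{u-thm2.8} (i)) makes $[X_i,\omega|_{X_i}]$ quasi-log canonical, and the vanishing theorem makes the restriction $H^0(X,\mathcal O_X(M))\to H^0(X_i,\mathcal O_{X_i}(M))$ surjective; hence it suffices to find the section on $X_i$, and I may assume $X=X_i$ is irreducible. If $\dim X_i\in\{0,1\}$ the point is handled directly by the vanishing theorem (dimension zero) or by Theorem \ref{a-thm3.1} (dimension one, using $N\cdot X_i>1$). So the substantial case is $\dim X=2$ with $X$ an irreducible projective surface. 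Here I would note that every curve $C$ on $X$ satisfies $N\cdot C\geq 2$: since $C$ is properly contained in the irreducible surface, it is not a component of the original $X$, and it cannot lie in any component of dimension $\leq 1$, so $n_C=2$ and the hypothesis of Conjecture \ref{a-conj1.2} gives $N\cdot C\geq 2$; likewise $N^2>4$.

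Next I would dispose of the case $P\in\Nqklt(X,\omega)$. By adjunction, $[\Nqklt(X,\omega),\omega|_{\Nqklt(X,\omega)}]$ is a quasi-log canonical pair of dimension $\leq 1$, and the vanishing theorem makes $H^0(X,\mathcal O_X(M))\to H^0(\Nqklt(X,\omega),\mathcal O_{\Nqklt(X,\omega)}(M))$ surjective. Every positive-dimensional qlc center is a curve $Z$ with $N\cdot Z\geq 2>1$, so the induction hypothesis (equivalently Theorem \ref{a-thm3.1}, together with the vanishing theorem when $\dim\Nqklt(X,\omega)=0$) produces a section on $\Nqklt(X,\omega)$ nonzero at $P$, which lifts to $X$. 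Thus I may assume $P\notin\Nqklt(X,\omega)$, in which case $X$ is normal at $P$, the non-normal locus of the pair being contained in $\Nqklt(X,\omega)$.

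Finally, to drop the normality hypothesis I would pass to the normalization $\nu:\widetilde X\to X$. By Theorem \ref{u-thm2.11}, $[\widetilde X,\nu^*\omega]$ is quasi-log canonical with $\nu_*\mathcal I_{\Nqklt(\widetilde X,\nu^*\omega)}=\mathcal I_{\Nqklt(X,\omega)}$, and the numerical hypotheses persist since $(\nu^*N)^2=N^2>4$ and $\nu^*N\cdot C=N\cdot\nu_*C\geq 2$ for every curve $C$ on $\widetilde X$. Writing $\widetilde P=\nu^{-1}(P)$, which is a single point as $\nu$ is an isomorphism near $P$, Theorem \ref{a-thm3.2} yields $\widetilde s\in H^0(\widetilde X,\mathcal I_{\Nqklt(\widetilde X,\nu^*\omega)}\otimes\mathcal O_{\widetilde X}(\nu^*M))$ with $\widetilde s(\widetilde P)\neq 0$. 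By the projection formula together with $\nu_*\mathcal I_{\Nqklt(\widetilde X,\nu^*\omega)}=\mathcal I_{\Nqklt(X,\omega)}$, this descends to $s\in H^0(X,\mathcal I_{\Nqklt(X,\omega)}\otimes\mathcal O_X(M))\subset H^0(X,\mathcal O_X(M))$ with $s(P)\neq 0$, completing the induction. The one genuinely deep input is Theorem \ref{a-thm3.2}, which is already proved; for the corollary itself the only real difficulty is bookkeeping, namely verifying that the numerical hypotheses of Conjecture \ref{a-conj1.2} descend correctly to each irreducible component, to $\Nqklt(X,\omega)$, and along the normalization, and tracking the defining ideal sheaves so that the pushforward in the last step genuinely produces a section nonzero at $P$.
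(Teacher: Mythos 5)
Your proposal is correct and follows essentially the same route as the paper: reduce to an irreducible two-dimensional piece via adjunction and the vanishing theorem, dispose of the cases where $P$ lies on a lower-dimensional qlc stratum using Theorem \ref{a-thm3.1}, then pass to the normalization via Theorem \ref{u-thm2.11} and apply Theorem \ref{a-thm3.2}, descending the section by $\nu_*\mathcal I_{\Nqklt(\widetilde X,\nu^*\omega)}=\mathcal I_{\Nqklt(X,\omega)}$. The only cosmetic difference is that the paper organizes the case division around the minimal qlc stratum $W$ through $P$ rather than around an irreducible component followed by a separate $P\in\Nqklt(X,\omega)$ case, and your explicit checks that $n_C=2$ for every curve on a two-dimensional component (hence $N\cdot C\geq 2$, $N^2>4$) are exactly the bookkeeping the paper relies on.
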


\begin{proof} 
Let $P$ be an arbitrary closed point of $X$ and let 
$W$ be the unique minimal qlc stratum of $[X, \omega]$ 
passing through $P$. Note that $W$ is irreducible by definition.
By adjunction (see Theorem \ref{u-thm2.8} (i)), 
$[W, \omega|_W]$ is an irreducible quasi-log canonical pair. 
By Theorem \ref{u-thm2.8} (ii), the natural restriction map 
\begin{equation}\label{a-eq3.2}
H^0(X, \mathcal O_X(M))\to H^0(W, \mathcal O_W(M))
\end{equation}
is surjective. From now on, we will see that $|M|$ is 
basepoint-free in a neighborhood of $P$. 
If $W=P$, that is, $P$ is 
a qlc center of $[X, \omega]$, then the 
complete linear system $|M|$ is obviously basepoint-free in 
a neighborhood of $P$ by the surjection \eqref{a-eq3.2}. 
Let us consider the case where $\dim W=1$. 
We put $M'=M|_W$ and $N'=N|_W=M'-\omega|_W$.
Then $\deg N'=N\cdot W >1$ by assumption.
Therefore, by Theorem \ref{a-thm3.1}, 
$|M'|$ is basepoint-free at $P$ because $[W, \omega|_W]$ is 
an irreducible projective quasi-log canonical curve.
Therefore, by the surjection \eqref{a-eq3.2}, 
we see that $|M|$ is basepoint-free in a neighborhood of $P$. 
Thus we may assume that 
$\dim W=\dim X=2$ and $X$ is irreducible by replacing $X$ with 
$W$ since the restriction map \eqref{a-eq3.2} 
is surjective. 
Therefore, 
we can assume that 
$X$ is irreducible and that $X$ is the unique qlc stratum of $[X, \omega]$ passing 
through $P$. In particular, $X$ is normal at $P$ 
(see, for example, \cite[Theorem 6.3.11 (ii)]{fujino-foundations}).
Let $\nu: \widetilde{X}\to X$ be the normalization. 
Note that $[\widetilde{X}, \nu^*\omega]$ is 
a qlc pair by Theorem \ref{u-thm2.11}.
We put $\widetilde{M}=\nu^*M$ 
and $\widetilde{N}=\nu^*N=\widetilde{M}-\nu^*\omega$.
It is obvious that $\widetilde{M}$ is Cartier.
Moreover, we have $(\widetilde{N})^2=N^2> 4$
and $\widetilde{N}\cdot Z\geq N \cdot \nu(Z)\geq 2$ for every curve 
$Z$ on $\widetilde{X}$. Note that $\dim \nu(Z)=\dim Z=1$ 
since $\nu$ is finite.
We also note that $P':=\nu^{-1}(P)$ is a point 
since $\nu: \widetilde{X}\to X$ is an isomorphism over some open neighborhood 
of $P$. This is because $X$ is normal at $P$. 
Now the assumptions of Theorem \ref{a-thm3.2} are all satisfied. 
Therefore, there is a section $s' \in H^0(\widetilde {X}, 
\mathcal I_{\Nqklt(\widetilde{X}, \nu^*\omega)}
\otimes \mathcal  O_{\widetilde{X}}(\widetilde{M}))$ 
such that $s'(P')\neq 0$.
We note that the non-normal part of $X$ is contained in $\Nqklt(X, \omega)$ 
(see, for example, \cite[Theorem 6.3.11 (ii)]{fujino-foundations}) 
and that the equality 
$$
\nu_*\mathcal I_{\Nqklt(\widetilde{X}, \nu^*\omega)}=
\mathcal I_{\Nqklt(X, \omega)}
$$ holds 
by Theorem \ref{u-thm2.11}. 
Therefore, we have 
$$
H^0(\widetilde{X}, \mathcal I_{\Nqklt(\widetilde{X}, \nu^*\omega)
}\otimes \mathcal O_{\widetilde{X}}(\widetilde{M})) \simeq  
H^0(X,\mathcal I_{\Nqklt(X, \omega)} \otimes 
\mathcal O_{X}(M)).
$$ 
Thus we can descend the section $s'$ on $\widetilde X$ to 
a section $s\in H^0(X, \mathcal I_{\Nqklt(X, \omega)}
\otimes \mathcal O_X(M))$ with $s(P)\neq 0$. 
Therefore, by this section $s \in H^0(X, \mathcal O_{X}(M))$,  
we see that $|M|$ is basepoint-free in a neighborhood of $P$. 
This is what we wanted. 
\end{proof}

We close this section with the proof of Corollary \ref{a-cor1.6}. 

\begin{proof}[Proof of Corollary \ref{a-cor1.6}]
Let $(X, \Delta)$ be a projective 
semi-log canonical 
pair. 
Then, by Theorem \ref{u-thm2.10}, 
$[X, K_X+\Delta]$ is a quasi-log canonical 
pair. 
Therefore, Corollary \ref{a-cor1.6} is a direct consequence of 
Theorem \ref{a-thm1.3} and Corollary \ref{a-cor1.5}. 
\end{proof}


\end{document}